\documentclass[reqno]{amsart}
  \usepackage{mathabx}
\usepackage{cite}
\usepackage{color}
\allowdisplaybreaks

% \date{\today}
%--------------------------------------
%\textheight 210 true mm
%\textwidth 160 true mm
%\oddsidemargin -10pt
% \evensidemargin -10pt
%----Theorems----------------------------------
\theoremstyle{plain}
\newtheorem{thm}{Theorem}[section]

\newtheorem{lem}[thm]{Lemma}
\newtheorem{prop}[thm]{Proposition}

\setlength{\arraycolsep}{1.31mm}
\theoremstyle{definition}

\theoremstyle{remark}
\newtheorem{rem}{Remark}[section]
\numberwithin{equation}{section}

%----------------------------------------------------------------------

%\usepackage[colorlinks=true,backref,citecolor=red]{hyperref} % 用pdflatex编译，可以得到参考文献是否使用的信息
% \renewcommand{\baselinestretch}{1.3}
%--------------------------------------------------------------------------------
\newcommand{\dv}{{\rm div\,}}
\newcommand{\dif}{{\rm d}}

\newcommand{\p}{{\partial}}
\newcommand{\pa}{{\partial^\alpha_x}}

\renewcommand{\u}{{\bf u}}
\renewcommand{\j}{{\bf{I}}_1}
\newcommand{\G}{{{\bf{J}}_1^\epsilon}}
\newcommand{\F}{{J^\epsilon_0}}
\newcommand{\I}{{\bf I}}
\renewcommand{\v}{{\vvvert}}

\newcommand{\U}{{\mathbf U}}

\newcommand{\W}{{\mathbf W}}
\newcommand{\D}{{\mathbf D}}
\newcommand{\A}{{\mathbf A}}
\newcommand{\s}{{\mathbf S}}
\newcommand{\R}{{\mathbf R}}

\begin{document}

\title[ Navier-Stokes-Fourier-P1 approximation model]
 {Non-relativistic limit of the compressible Navier-Stokes-Fourier-P1 approximation model arising in radiation hydrodynamics}
\author{Song Jiang}
\address{Institute of Applied Physics and Computational Mathematics, P.O. Box 8009, Beijing 100088, P.R. China}
\email{jiang@iapcm.ac.cn}
%
 %\author [Fucai Li]{Fucai Li}
\author [Fucai Li] {Fucai Li}%
% \thanks{$^*$Corresponding  author}
\address{Department  of Mathematics,  Nanjing University, Nanjing 210093, P.R. China}
 \email{fli@nju.edu.cn}

\author[Feng Xie]{Feng Xie}%$\,^*$}
\address{Department of Mathematics, and LSC-MOE,
 Shanghai Jiao Tong University,
Shanghai 200240, P.R.China}
\email{tzxief@sjtu.edu.cn}

\begin{abstract}
As is well-known that the general radiation hydrodynamics models include two mainly coupled parts: one is macroscopic
 fluid part, which is governed by the compressible Navier-Stokes-Fourier equations; another is radiation field part, which is described by the transport equation of photons. Under the two physical approximations: ``gray" approximation and P1 approximation, one can derive the so-called Navier-Stokes-Fourier-P1 approximation radiation hydrodynamics model from the general one. In this paper we study the non-relativistic limit problem for the Navier-Stokes-Fourier-P1 approximation model due to the fact that the speed of light is much larger than the speed of the  macroscopic fluid.
Our results give a rigorous derivation of the widely used macroscopic model in radiation hydrodynamics.
\end{abstract}

\keywords{Radiation Hydrodynamics, Compressible Navier-Stokes-Fourier system, P1 approximation, ``Gray" approximation, Non-relativistic limit}

\subjclass[2000]{35Q30, 35Q70, 35B25}

\maketitle

%-----------------section one-------------------------------------------------------------
%\renewcommand{\theequation}{\thesection.\arabic{equation}}
%\setcounter{equation}{0}
\section{Introduction and Main Results} \label{S1}
The key aim of the radiation hydrodynamics is to include the
radiation effects into hydrodynamics. And hydrodynamics with explicit account of the radiation energy and momentum contributions
constitutes the main charter of ``radiation hydrodynamics". If the viscosity and heat-conductivity of macroscopic fluids are also
considered, the general radiation hydrodynamics equations can be written as the following compressible Navier-Stokes-Fourier
system with additional radiation terms (\!\cite{P}):
\begin{align}
&\p_t\rho  +\dv(\rho\u)=0,  \label{0.1}\\
&\p_t\Big(\rho\u+\frac{1}{c^2}F_r\Big) +  \dv(\rho\u\otimes \u + P \mathbb{I}_{n}+P_r) = \dv\Psi(\u), \label{0.2} \\
&\p_t(\rho E+E_r) +\dv (\rho\u(E+P)+F_r)=\dv (\Psi(\u)\cdot \u)+\dv(\kappa \nabla \theta).\label{0.3}
   \end{align}
 Here  the unknowns $\rho,\u=(u_1,\dots,u_n)\in \mathbb{R}^n (n=2,3)$
and $\theta$  denote the density, the velocity, and the temperature of the fluid, respectively;
$\Psi(\u)$ is the viscous stress tensor given by
\begin{equation}\label{psi}
\Psi(\u)=2\mu \mathbb{D}(\u)+\lambda\dv\u \;\mathbb{I}_n, \quad \mathbb{D}(\u)=(\nabla\u+\nabla\u^\top)/2,
\end{equation}
where $\mathbb{I}_n$  denotes the $n\times n$ identity matrix,
and $\nabla \u^\top$   the transpose of the matrix
$\nabla \u$. The pressure $P=P(\rho,\theta)$ and
the internal energy $e=e(\rho,\theta)$ are smooth functions of $\rho$ and $\theta$ and satisfy the Gibbs relation
\begin{equation}\label{gibbs}
\theta \mathrm{d}S=\mathrm{d}e + P\,\mathrm{d}\left(\frac{1}{\rho}\right)
\end{equation}
for some smooth function (entropy) $S=S(\rho,\theta)$, which expresses the first law of the
thermodynamics.
$E=e+\frac{|\u|^2}{2}$ denotes the total energy.
The viscosity coefficients $\mu$ and $\lambda $ of the fluid satisfy
 $\mu>0$ and $2\mu+n\lambda>0$. The parameter  $\kappa>0$ is the heat
conductivity.
 For simplicity we assume that $\mu,\lambda$, and $\kappa$ are constants.

Now we consider
 the radiation energy $E_r$, the radiation flux $F_r$ and the radiation pressure $P_r$ appearing in \eqref{0.1}--\eqref{0.3} which can be defined by
\begin{align*}
& E_r=\frac{1}{c}\int_0^{\infty}\dif\nu\int_{S^{n-1}}I(\nu,\omega)\dif \omega,\\
&F_r=\int_0^{\infty}\dif\nu\int_{S^{n-1}}\omega I(\nu,\omega)\dif\omega,\\
& P_r=\frac{1}{c}\int_0^{\infty}\dif\nu\int_{S^{n-1}}\omega\otimes\omega I(\nu,\omega)\dif\omega.
\end{align*}
Here the radiation intensity $I=I(t,x,\nu,\omega)$, depending on the direction vector $\omega\in S^{n-1}$ and the frequency $\nu\geq 0$,
 is determined by solving the linear Boltzmann type equation:
\begin{align}
\frac1c\p_t I +\omega\cdot\nabla I=&S(\nu)-\sigma_a(\nu)I\nonumber\\
&+\int_0^{\infty}\dif\nu'\int_{S^{n-1}}\Big[\frac{\nu}{\nu'}\sigma_s(\nu'\rightarrow\nu)I(\nu',\omega')
-\sigma_s(\nu\rightarrow\nu')I(\nu,\omega)\Big]\dif\omega'. \nonumber
\end{align}
The emission term $S(\nu)$ can be taken as the well-known Planck function, i.e.,
 $$S(\nu)=2h\nu^3c^{-2}(e^{h\nu/k\theta}-1)^{-1}.$$
 In general the absorbing coefficient $\sigma_a$ and the scattering coefficient $\sigma_s$ depend on the frequency $\nu$, the density $\rho$, and the
 temperature $\theta$ of the macroscopic fluid.

 In the present paper,
  we focus on the ``gray" approximation case such that the transport coefficients $\sigma_a$ and $\sigma_s$ are independent of the frequency $\nu$.
  Consequently, the radiation quantities both $I$ and $ S$ can be integrated on frequency. For $S$, we have
\begin{align*}
\int_0^{\infty}S(\nu)\dif\nu=\int_0^{\infty}2h\nu^3c^{-2}(e^{h\nu/k\theta}-1)^{-1}\dif\nu=\bar{C}\theta^4,
\end{align*}
for some positive constant $\bar{C}$. To deal with $I$, for simplification, we further assume that both $\sigma_a$ and $\sigma_s$
are two positive constants in the following derivation. It should be noted that the general case $\sigma_a=\sigma_a(\rho,\theta)$ and $\sigma_s=\sigma_s(\rho,\theta)$ can also be dealt with similarly. In this way, the equation for the integration of $I$
with respect to frequency $\nu$, still denoted by $I$, can be written as
\begin{align}
\label{FX}
\frac1c\p_tI+\omega\cdot\nabla I=\bar{C}\theta^4-\sigma_a I+\sigma_s|S^{n-1}|(\langle I\rangle-I)
\end{align}
with the average $\langle I\rangle:=\frac{1}{|S^{n-1}|}\int_{S^{n-1}}I(t,x, \omega)\dif\omega$.

In addition, when the distribution of photons is almost isotropic, one can take the P1 hypothesis by choosing the ansatz
\begin{align}
\label{FX1}
I=I_0+\I_1\cdot\omega,
\end{align}
where $I_0$ and $\I_1$ do not depend on $\omega$, $\I_1\cdot\omega$ is regarded as a correction term of the main term $I_0$.
Inserting the ansatz (\ref{FX1}) into (\ref{FX}) gives
\begin{align}
\label{FX2}
&\frac1c\p_t(I_0+\I_1\cdot\omega)+\omega\cdot\nabla (I_0+\I_1\cdot\omega)\nonumber\\
&\qquad =\bar{C}\theta^4-\sigma_a (I_0+\I_1\cdot\omega)+\sigma_s|S^{n-1}|(\langle I_0+\I_1\cdot\omega\rangle-(I_0+\I_1\cdot\omega)).
\end{align}
Then integrating both the equation (\ref{FX2}) and the resulting equation of (\ref{FX2})$\cdot\omega$ with respect to $\omega$ over $S^{n-1}$ lead to
\begin{align}
&\frac1c\p_t I_0+\frac{1}{n|S^{n-1}|}\dv_x \I_1=\bar{C}\theta^4-\sigma_a I_0,\label{FX3}\\
&\frac1c\p_t \I_1+\nabla_xI_0=-(\sigma_a+\sigma_s|S^{n-1}|)\I_1. \label{FX4}
\end{align}
Moreover, by the equation (\ref{FX}) and the definitions of $E_r, F_r$ and $P_r$, we obtain that
\begin{align}
&\frac{1}{c^2}\p_t F_r+\dv P_r=-\frac1c\frac{\sigma_a+\sigma_s|S^{n-1}|}{n}\I_1, \label{FX5}\\
&\p_t E_r+\dv F_r=\bar{C}|S^{n-1}|\theta^4-|S^{n-1}|\sigma_aI_0.\label{FX6}
\end{align}
One can also refer to the Chapter 3 in the book \cite{P}
by Pomraning for the above derivation in details.

In general, the speed of light $c$ can be regarded as a very large number such that the reciprocal of the light speed $\frac1c$ is very small.
If we take $\epsilon=\frac1c$ and ignore the influence of other constants, we obtain the following compressible Navier-Stokes-Fourier-P1 approximation model  via the system
\eqref{0.1}--\eqref{0.3}, \eqref{FX3}--\eqref{FX6}:
\begin{align}
& \p_t\rho  +\dv(\rho\u)=0, \label{rada} \\
&\p_t(\rho\u) +  \dv(\rho\u\otimes \u + P \mathbb{I}_{n}) = \dv\Psi(\u)+\epsilon \j, \label{radb} \\
&\p_t(\rho E) +\dv (\rho\u(E+P))=\dv (\Psi(\u)\cdot \u)+\dv(\kappa \nabla \theta)+I_0-\theta^4,\label{radc}\\
&\epsilon \p_t I_0+\dv \j =\theta^4-I_0, \label{radd}\\
&\epsilon \p_t\j+\nabla I_0=-\j. \label{rade}
\end{align}
In this paper we consider the non-relativistic limit $\epsilon\rightarrow  0$ for the system \eqref{rada}--\eqref{rade}.
Formally, letting $\epsilon= 0$ in \eqref{radd} and \eqref{rade}, we obtain that
\begin{align*}
\dv \j=\theta^4-I_0, \quad -\j=\nabla I_0.
\end{align*}
Hence we have
\begin{align}\label{radf}
-\Delta I_0=\theta^4-I_0.
\end{align}
Taking gradient to \eqref{radf}, one gets
\begin{align}\label{radg}
   -\nabla \dv(\nabla I_0)=\nabla \theta^4-\nabla I_0.
\end{align}
Setting $\mathbf{q}=-\nabla I_0$, we can rewrite \eqref{radg} as
\begin{align}%\label{radh}
   -\nabla \dv \mathbf{q} +\mathbf{q}+\nabla \theta^4=0. \nonumber
\end{align}
Therefore, we can formally obtain the following limit system from \eqref{rada}--\eqref{rade} as $\epsilon \rightarrow 0$:
\begin{align}
& \p_t\rho  +\dv(\rho\u)=0, \label{radi} \\
&\p_t(\rho\u) +  \dv(\rho\u\otimes \u + P \mathbb{I}_{n}) = \dv\Psi(\u), \label{radj} \\
&\p_t(\rho E) +\dv (\rho\u(E+P))=\dv (\Psi(\u)\cdot \u)+\dv(\kappa \nabla \theta)-\dv \mathbf{q},\label{radk}\\
& -\nabla \dv \mathbf{q} +\mathbf{q}+\nabla \theta^4=0. \label{radl}
\end{align}

The system \eqref{radi}--\eqref{radl} with $\mu, \lambda$ and $\kappa$ being zero are widely used in \cite{KNY,KNY1,LMS,LCG,L,NPZ,RX,WJX1}
to describe the dynamics of the fluid in radiation hydrodynamics. For the case that $\mu, \lambda$ and $\kappa$ are not zero,
one can refer to \cite{WJX,WX} and references cited therein.

The purpose of this paper is to give a  rigorous derivation of the system  \eqref{radi}--\eqref{radl} from the
 Navier-Stokes-Fourier-P1 approximation model \eqref{rada}--\eqref{rade} as  $\epsilon$ tends to
  zero.
 For the sake of simplicity and clarity of presentation, we shall focus on the  fluids obeying the perfect gas relations：
\begin{align} \label{rhpg}
P=\mathfrak{R}\rho \theta,\quad e=c_V\theta,
\end{align}
where the parameters $\mathfrak{R}>0$ and $ c_V\!>\!0$ are the gas constant and
the heat capacity at constant volume.
 We consider the system \eqref{rada}--\eqref{rade} in the whole space $\mathbb{R}^n$ or  the torus
$\mathbb{T}^n=(\mathbb{R}/(2\pi \mathbb{Z}))^n$, which will be denoted by $\Omega$.

%%%%%%%%%%%%%%%%%%%%%%%%%%%%%%%%%
% \newpage

In what follows, for simplicity of presentation, we take the physical constants $\mathfrak{R} $ and $ c_V$ to be one.
To emphasize the unknowns depending  on the small parameter
$\epsilon$, we rewrite the system \eqref{rada}--\eqref{rade}, \eqref{psi}, \eqref{gibbs}, \eqref{rhpg}  as
\begin{align}
 &\partial_t \rho^\epsilon  +\dv(\rho^\epsilon\u^\epsilon)=0, \label{radya} \\
&\rho^\epsilon(\p_t\u^\epsilon +  \u^\epsilon\cdot \nabla\u^\epsilon)+\nabla (\rho^\epsilon\theta^\epsilon)
  = \dv\Psi(\u^\epsilon)+ \epsilon\mathbf{I}_1^\epsilon, \label{radyb} \\
&\rho^\epsilon  (\p_t\theta^\epsilon+\u^\epsilon\cdot \nabla \theta^\epsilon)+\rho^\epsilon\theta^\epsilon  \dv \u^\epsilon   =\kappa\Delta  \theta^\epsilon
 +\Psi(\u^\epsilon):\nabla\u^\epsilon + I^\epsilon_0-(\theta^\epsilon)^4,\label{radyc}\\
&\epsilon \p_t I_0^\epsilon+\dv \mathbf{I}_1^\epsilon =(\theta^\epsilon)^4-I_0^\epsilon, \label{radyd}\\
&\epsilon \p_t\mathbf{I}_1^\epsilon+\nabla I_0^\epsilon=-\mathbf{I}_1^\epsilon, \label{radye}
\end{align}
where  $\Psi(\u^\epsilon)$ % and
is defined through \eqref{psi} with $\u$ replaced by $\u^\epsilon$.
The symbol $\Psi(\u^\epsilon):\nabla\u^\epsilon$   denotes the scalar product of two matrices:
\begin{align}\label{psiu}
\Psi(\u^\epsilon):\nabla\u^\epsilon
&=\sum^n_{i,j=1}\frac{\mu}{2}\left(\frac{\partial
u^\epsilon_i}{\partial x_j} +\frac{\partial u^\epsilon_j}{\partial
x_i}\right)^2+\lambda|\dv\u^\epsilon|^2\nonumber
\\
& =
2\mu|\mathbb{D}(\u^\epsilon)|^2+\lambda|\mbox{tr}\mathbb{D}(\u^\epsilon)|^2.
\end{align}
 The system \eqref{radya}--\eqref{psiu} are supplemented with initial data
\begin{align}\label{radyf}
 (\rho^\epsilon, \u^\epsilon, \theta^\epsilon, I^\epsilon_0,\mathbf{I}_1^\epsilon)|_{t=0}
 =( \rho_0^\epsilon(x), \u_0^\epsilon(x),\theta_0^\epsilon(x), I^\epsilon_{00}(x),\mathbf{I}_{10}^\epsilon(x)), \quad x\in \Omega.
\end{align}

We also rewrite the limit equations \eqref{radi}--\eqref{radl}, \eqref{psi}, \eqref{gibbs}, \eqref{rhpg} (recall that $\mathfrak{R}=c_V=1  $) as
\begin{align}
& \p_t\rho^0  +\dv(\rho^0\u^0)=0, \label{radza} \\
&\rho^0(\p_t\u^0  +   \u^0\cdot\nabla \u^0) + \nabla (\rho^0\theta^0) = \dv\Psi(\u^0), \label{radzb} \\
&\rho^0 (\p_t\theta^0+\u^0\cdot \nabla \theta^0)+\rho^0\theta^0  \dv \u^0=\kappa\Delta  \theta^0
 +\Psi(\u^0):\nabla\u^0-\dv \mathbf{q}^0,\label{radzc}\\
& -\nabla \dv \mathbf{q}^0 +\mathbf{q}^0+\nabla (\theta^0)^4=0. \label{radzd}
\end{align}
where  $\Psi(\u^0)$ and $\Psi(\u^0):\nabla\u^0$
are defined through \eqref{psi} and \eqref{psiu} with $\u$ and $\u^\epsilon$ replaced by $\u^0$, respectively.
The system  \eqref{radza}--\eqref{radzd} are  equipped with   initial data
\begin{align}\label{radze}
(\rho^0, \u^0, \theta^0)|_{t=0}
 =( \rho_0^0(x), \u_0^0(x),\theta_0^0(x)), \quad x\in \Omega .
\end{align}

We first state a result on the local existence  of smooth solutions to the problem \eqref{radza}--\eqref{radze}, one can refer to \cite{WX} for a similar proof in details.

 \begin{prop} \label{Pa}
  Let $s>n/2+2$ be an integer and
 assume that the initial data $(\rho^0_0, \u^0_0,\theta^0_0)$ satisfy
\begin{gather*}
 \rho^0_0, \u^0_0, \theta^0_0\in H^{s+2}(\Omega), \nonumber\\
    0<\bar \rho= \inf_{x\in \Omega }\rho^0_0(x)\leq \rho^0_0(x)\leq
    \bar {\bar \rho}= \sup_{x\in \Omega }\rho^0_0(x)<+\infty,\\
    0<\bar \theta= \inf_{x\in \Omega }\theta^0_0(x)\leq \theta^0_0(x)\leq
    \bar {\bar \theta}= \sup_{x\in \Omega }\theta^0_0(x)<+\infty
\end{gather*}
 for some positive constants $\bar \rho, \bar{\bar\rho}, \bar \theta$, and $\bar{\bar\theta}$. Then there exist positive
 constants $T_*\,($the maximal time interval, $ 0<T_*\leq +\infty )$, and $\hat \rho, \tilde{\rho}, \hat \theta, \tilde{\theta} $,
 such that the problem
\eqref{radza}--\eqref{radze} has a unique classical solution $(\rho^0,\u^0,\theta^0,\mathbf{q}^0)$ satisfying
\begin{gather*}
   \rho^0 \in C^l([0,T_*),H^{s+2-l}(\Omega )),\, \,\, \u^0, \theta^0 \in  C^l([0,T_*),H^{s+2-2l}(\Omega )), \ \   l=0,1;\ \ \\
    \mathbf{q}^0 \in C^l([0,T_*),H^{s+1-2l}(\Omega )), \ \   l=0,1;\ \  \\
    0<\hat \rho= \inf_{(x,t)\in \Omega \times [0,T_*)}\rho^0(x,t)\leq  \rho^0(x,t)\leq
    {\tilde \rho}= \sup_{(x,t)\in \Omega \times [0,T_*)}\rho^0(x,t)<+\infty,\\
       0<\hat \theta= \inf_{(x,t)\in \Omega \times [0,T_*)}\theta^0(x,t)\leq \theta^0(x,t)\leq
    {\tilde \theta}= \sup_{(x,t)\in \Omega \times [0,T_*)}\theta^0(x,t)< +\infty.
\end{gather*}

 \end{prop}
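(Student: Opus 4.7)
The plan is to eliminate $\mathbf{q}^0$ explicitly using the elliptic structure of \eqref{radzd}, rewrite \eqref{radza}--\eqref{radzc} as a closed compressible Navier-Stokes-Fourier system with a nonlocal smoothing source, and then invoke the standard local existence theory for quasilinear hyperbolic-parabolic systems as in \cite{WX}.

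\textbf{Elimination step.} Taking the divergence of \eqref{radzd} gives $(I-\Delta)\dv\mathbf{q}^0=-\Delta(\theta^0)^4$, whence
\[
\dv\mathbf{q}^0=(\theta^0)^4-(I-\Delta)^{-1}[(\theta^0)^4]=:\Lambda[\theta^0],\qquad \mathbf{q}^0=-\nabla(I-\Delta)^{-1}[(\theta^0)^4].
\]
Here $(I-\Delta)^{-1}$ is a positivity-preserving contraction on $L^\infty$ that gains two Sobolev derivatives (Bessel potential). Substituting $-\dv\mathbf{q}^0=-\Lambda[\theta^0]$ into \eqref{radzc} produces a system for $(\rho^0,\u^0,\theta^0)$ alone in which the coupling to radiation has become a nonlocal, lower-order source depending polynomially on $\theta^0$.

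\textbf{Iteration and a priori bounds.} I would construct the solution by the classical linearized iteration: given $(\rho^{0,k},\u^{0,k},\theta^{0,k})$ with strictly positive density and temperature, solve the linear transport equation for $\rho^{0,k+1}$ and the linear parabolic problems for $\u^{0,k+1}$ and $\theta^{0,k+1}$, the latter carrying the frozen source $-\Lambda[\theta^{0,k}]$. Applying $\pa$ for $|\alpha|\le s+2$, integrating by parts, and exploiting the symmetric structure of the pressure term together with the parabolic regularization in the momentum and energy equations, one obtains a Gronwall-type differential inequality for the $H^{s+2}$ energy, uniform in $k$ on a short interval $[0,T]$ determined by $\|(\rho_0^0,\u_0^0,\theta_0^0)\|_{H^{s+2}}$ and by $\bar\rho,\bar\theta$. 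Since $(I-\Delta)^{-1}$ is bounded on every $H^m$, the nonlocal source contributes only a tame term of the same Sobolev order as $(\theta^{0,k})^4$ and does not disrupt the estimate. A contraction in a weaker norm on a possibly smaller interval $[0,T_*)$ yields convergence to a classical solution in the claimed regularity class, and uniqueness follows from a standard energy identity for the difference of two solutions.

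\textbf{Positivity, $\mathbf{q}^0$-regularity, and the main difficulty.} Strict positivity of $\rho^0$ is preserved by the continuity equation along the flow of $\u^0$, giving uniform bounds $\hat\rho,\tilde\rho$ on $[0,T_*)$. Strict positivity of $\theta^0$ follows from the parabolic minimum principle applied to \eqref{radzc}: the nonlocal source $(I-\Delta)^{-1}[(\theta^0)^4]-(\theta^0)^4$ is pointwise controlled by $\|\theta^0\|_{L^\infty}^4$, so tracking the spatial minimum of $\theta^0$ along characteristics yields $\theta^0\ge \bar\theta - Ct$ on a short interval, from which $\hat\theta>0$ and $\tilde\theta<\infty$ follow. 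The regularity of $\mathbf{q}^0$ claimed in the proposition is immediate from the explicit representation above, which actually produces $\mathbf{q}^0\in C^l([0,T_*);H^{s+3-2l})$, embedding into the stated spaces. The main obstacle is controlling the coupling of the hyperbolic density equation with the parabolic equations for $\u^0$ and $\theta^0$ in the presence of the nonlinear nonlocal source; however, because $(I-\Delta)^{-1}$ is a smoothing operator of order $-2$ and the map $\theta\mapsto\theta^4$ is smooth on $\{\theta>0\}$, this source is a tame, controlled perturbation of the standard compressible Navier-Stokes-Fourier system, and the argument parallels the proof in \cite{WX}.
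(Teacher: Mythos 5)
Your argument is correct and is essentially the route the paper itself takes: the paper does not prove Proposition \ref{Pa} but defers to \cite{WX}, and it uses exactly your elimination formula $\mathbf{q}^0=\frac{-\nabla}{I-\Delta}(\theta^0)^4$ (see Section \ref{S2}), after which the system reduces to a compressible Navier--Stokes--Fourier system with a tame, smoothing, zero-order nonlocal source, handled by the standard linearized iteration, hyperbolic--parabolic energy estimates, and contraction. Your extra observations (the gain of two derivatives giving $\mathbf{q}^0\in C^l([0,T_*);H^{s+3-2l})$, and positivity of $\rho^0,\theta^0$ via transport along characteristics and the parabolic minimum principle) are consistent with, and slightly sharper than, the stated conclusions.
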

Our convergence results   can be stated as follows.
%%%%%%%%%%%%%%%%%%%%%%%%%%%%%%%%%%%
\begin{thm}\label{th}
Let $s> n/2+2$ be an integer and $(\rho^0, \u^0,\theta^0, \mathbf{q}^0)$
be the unique classical solution to the problem \eqref{radza}--\eqref{radze} given in Proposition \ref{Pa}.
 Suppose that the initial data $(\rho^\epsilon_0, \u^\epsilon_0,\theta_0^\epsilon, I^\epsilon_{00},\mathbf{I}_{10}^\epsilon)$ satisfy
$$
 \rho^\epsilon_0, \u^\epsilon_0,\theta_0^\epsilon,  I^\epsilon_{00},\mathbf{I}_{10}^\epsilon\in H^{s}(\Omega ), \
 \  \inf_{x\in \Omega }
   \rho^\epsilon_0(x)>0, \      \inf_{x\in \Omega }
  \theta^\epsilon_0(x)>0,
 $$
and
\begin{align}
&  \Vert (\rho^\epsilon_0-\rho^0_0, \u^\epsilon_0-\u^0_0, \theta^\epsilon_0-\theta^0_0) \Vert_{s}\nonumber\\
& \qquad \qquad \qquad
+  \sqrt{\epsilon}\left\Vert (I^\epsilon_{00}-(-\Delta)^{-1}\dv\mathbf{q}^0_0, \mathbf{I}_{10}^\epsilon-  \mathbf{q}^0_0 )\right\Vert_{s} \leq  L_0 {\epsilon} \label{ivda}
 \end{align}
for some constant $L_0>0$. Then, for any $T_0\in (0,T_* )$, there exist a constant $L>0$, and
a sufficient small constant $\epsilon_0>0$, such that for any $\epsilon\in
(0,\epsilon_0]$, the problem \eqref{radya}--\eqref{radyf} has a unique smooth solution $(\rho^\epsilon,
\u^\epsilon, \theta^\epsilon, I^\epsilon_{0},\mathbf{I}_{1}^\epsilon)$ on $[0,T_0]$ enjoying
\begin{align}\label{iivda}
&  \Vert (\rho^\epsilon-\rho^0, \u^\epsilon-\u^0, \theta^\epsilon-\theta^0) \Vert_{s} \nonumber\\
& \qquad \qquad \qquad +  \sqrt{\epsilon}\left\Vert (I^\epsilon_{0}-(-\Delta)^{-1}\dv\mathbf{q}^0, \mathbf{I}_{1}^\epsilon-\mathbf{q}^0 )\right\Vert_{s}
\leq L {\epsilon},  \ \ t\in [0,T_0].
 \end{align}
 Here $\mathbf{q}^0_0$ is defined via the initial datum $\theta^0_0$ in the following way:
 \begin{align*}
    \mathbf{q}^0_0=\Big(\frac{-\Delta}{I-\Delta}-I\Big)\nabla (\theta^0_0)^4
 \end{align*}
 and $\|\cdot\|_{s}$ denotes the norm of Sobolev space $H^s(\Omega )$.
\end{thm}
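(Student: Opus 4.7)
The plan is to combine local existence of smooth solutions to \eqref{radya}--\eqref{radyf} with a uniform-in-$\epsilon$ weighted energy estimate on the difference from the limit profile, then use a standard continuation/bootstrap to push the local interval up to $T_0$. More precisely, for fixed $\epsilon>0$ symmetrizability of the fluid subsystem combined with the strictly dissipative relaxation structure of \eqref{radyd}--\eqref{radye} yields a local smooth solution on some maximal interval $[0,T_{\epsilon})$. Setting
\[
I_0^{\mathrm{lim}}:=(I-\Delta)^{-1}(\theta^0)^4,\qquad \mathbf{q}^0=-\nabla I_0^{\mathrm{lim}},
\]
so that $I_0^{\mathrm{lim}}-(\theta^0)^4=-\dv\mathbf{q}^0$, I would introduce the error variables
\[
\tilde\rho=\rho^\epsilon-\rho^0,\ \tilde\u=\u^\epsilon-\u^0,\ \tilde\theta=\theta^\epsilon-\theta^0,\ \tilde I_0=I_0^\epsilon-I_0^{\mathrm{lim}},\ \tilde{\mathbf{I}}_1=\mathbf{I}_1^\epsilon-\mathbf{q}^0.
\]
Subtracting \eqref{radza}--\eqref{radzd} from \eqref{radya}--\eqref{radye} and using $\dv\mathbf{q}^0=(\theta^0)^4-I_0^{\mathrm{lim}}$, the error system keeps the same quasilinear fluid structure with linear source $\epsilon\mathbf{q}^0+\epsilon\tilde{\mathbf{I}}_1$ in the momentum equation and $\tilde I_0-((\theta^\epsilon)^4-(\theta^0)^4)$ in the temperature equation, while the radiation errors satisfy
\[
\epsilon\partial_t\tilde I_0+\dv\tilde{\mathbf{I}}_1=-\tilde I_0+((\theta^\epsilon)^4-(\theta^0)^4)-\epsilon\partial_t I_0^{\mathrm{lim}},
\qquad
\epsilon\partial_t\tilde{\mathbf{I}}_1+\nabla\tilde I_0=-\tilde{\mathbf{I}}_1-\epsilon\partial_t\mathbf{q}^0.
\]
The regularity in Proposition \ref{Pa}, together with the resolvent formula $\mathbf{q}^0=-(I-\Delta)^{-1}\nabla(\theta^0)^4$, makes $\partial_t I_0^{\mathrm{lim}}$ and $\partial_t\mathbf{q}^0$ bounded in $H^s$ uniformly on $[0,T_0]$.

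\textbf{Weighted energy estimate.} The natural functional is
\[
\E(t):=\|(\tilde\rho,\tilde\u,\tilde\theta)(t)\|_{s}^{2}+\epsilon\,\|(\tilde I_0,\tilde{\mathbf{I}}_1)(t)\|_{s}^{2}.
\]
For each multi-index $|\alpha|\le s$, I apply $\partial^\alpha_x$ to both radiation error equations, take the $L^2$ pairing with $\partial^\alpha_x\tilde I_0$ and $\partial^\alpha_x\tilde{\mathbf{I}}_1$ respectively, and sum. The central cancellation is that the two cross terms $\langle\partial^\alpha_x\dv\tilde{\mathbf{I}}_1,\partial^\alpha_x\tilde I_0\rangle$ and $\langle\partial^\alpha_x\nabla\tilde I_0,\partial^\alpha_x\tilde{\mathbf{I}}_1\rangle$ annihilate after integration by parts, producing coercive dissipation
\[
\tfrac{\epsilon}{2}\tfrac{d}{dt}\|(\tilde I_0,\tilde{\mathbf{I}}_1)\|_{s}^{2}+\|(\tilde I_0,\tilde{\mathbf{I}}_1)\|_{s}^{2}\le \text{(RHS)}.
\]
For the fluid part I would use the standard symmetrizer and Moser-type commutator estimates, leading to
\[
\tfrac{d}{dt}\|(\tilde\rho,\tilde\u,\tilde\theta)\|_{s}^{2}+\kappa\|\nabla\tilde\theta\|_{s}^{2}+\mu\|\nabla\tilde\u\|_{s}^{2}\le C(1+\|(\tilde\rho,\tilde\u,\tilde\theta)\|_s)\|(\tilde\rho,\tilde\u,\tilde\theta)\|_{s}^{2}+(\text{couplings}).
\]
The couplings $\epsilon\mathbf{q}^0+\epsilon\tilde{\mathbf{I}}_1$ and $\tilde I_0-((\theta^\epsilon)^4-(\theta^0)^4)$ are absorbed via Cauchy--Schwarz--Young into either the radiation dissipation $\|(\tilde I_0,\tilde{\mathbf{I}}_1)\|_{s}^{2}$ or the Gronwall term, while the source contributions $\epsilon\partial_t I_0^{\mathrm{lim}}$ and $\epsilon\partial_t\mathbf{q}^0$ yield an $O(\epsilon^2)$ forcing. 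The result is a differential inequality of the form
\[
\tfrac{d}{dt}\E(t)+c\,\|(\tilde I_0,\tilde{\mathbf{I}}_1)\|_{s}^{2}\le C\,\E(t)+C\,\epsilon^{2},
\]
with $\E(0)\le 2L_0^2\epsilon^2$ thanks to \eqref{ivda}. Gronwall then gives $\E(t)\le C'\epsilon^{2}$ on $[0,T_0]$, which reads exactly as \eqref{iivda}.

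\textbf{Bootstrap and continuation.} Because the nonlinear coefficients in the fluid estimate depend on $\|(\rho^\epsilon,\u^\epsilon,\theta^\epsilon)\|_{s}$, the estimate must be run under an \emph{a priori} assumption, e.g.\ $\|(\tilde\rho,\tilde\u,\tilde\theta)\|_s+\sqrt\epsilon\|(\tilde I_0,\tilde{\mathbf{I}}_1)\|_s\le M\epsilon$ with $M$ chosen after the constants in the energy estimate are fixed, and then choosing $\epsilon_0$ so small that the conclusion yields the strict inequality $\le M\epsilon/2$ on the same interval. A standard continuation argument then rules out blowup of the $\epsilon$-solution before $T_0$ and delivers the bound \eqref{iivda} uniformly in $\epsilon\in(0,\epsilon_0]$.

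\textbf{Main obstacle.} The delicate step is controlling the quartic nonlinearity $(\theta^\epsilon)^4-(\theta^0)^4$, which appears both as a source in the $\tilde I_0$ equation and (through the energy balance) as a coupling back into $\tilde\theta$. Writing $(\theta^\epsilon)^4-(\theta^0)^4=\tilde\theta\cdot P(\theta^\epsilon,\theta^0)$ for a polynomial $P$, the high-order $H^s$ product/Moser estimates must be carried out without losing any power of $\epsilon$, and the resulting contribution to the $\tilde I_0$ estimate must be Young-split so that a small fraction of the radiation dissipation $\|\tilde I_0\|_s^2$ is consumed, with the remainder fed into the Gronwall term for $\|\tilde\theta\|_s^2$. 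Keeping all constants independent of $\epsilon$ under the bootstrap assumption, while simultaneously exploiting the $\sqrt\epsilon$-weight on $(\tilde I_0,\tilde{\mathbf{I}}_1)$ so that $\epsilon\tilde{\mathbf{I}}_1$ in the momentum equation contributes only $O(\epsilon^{3/2})$, is the technical heart of the argument.
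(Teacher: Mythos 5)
Your proposal is correct and follows essentially the same route as the paper: the same error variables (your $I_0^{\mathrm{lim}}=(I-\Delta)^{-1}(\theta^0)^4$ coincides with the paper's $(-\Delta)^{-1}\dv\mathbf{q}^0$), the same $\epsilon$-weighted energy functional $\|(\tilde\rho,\tilde\u,\tilde\theta)\|_s^2+\epsilon\|(\tilde I_0,\tilde{\mathbf I}_1)\|_s^2$, the same use of the relaxation terms for coercive dissipation of $(\tilde I_0,\tilde{\mathbf I}_1)$ and of viscosity/heat conduction for the fluid part, Moser-type commutator estimates, Gronwall, and continuation. The only cosmetic difference is that the paper's Gronwall inequality carries higher powers $(\|\mathcal{E}^\epsilon\|_s^{2(s+1)}+1)\|\mathcal{E}^\epsilon\|_s^2$ rather than a purely linear term, which your bootstrap assumption handles in the same way.
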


\begin{rem}
If the domain $\Omega$ in the Proposition \ref{Pa} is the whole space $\mathbb{R}^n$, then the conditions $\rho^0_0, \u^0_0, \theta^0_0\in H^{s+2}(\Omega )$ should be replaced by $\rho^0_0-\check{\rho}, \u^0_0, \theta^0_0-\check{\theta}\in H^{s+2}(\Omega)$ for some positive constants $\check{\rho}$ and $\check{\theta}$. At the same time, the conditions $\rho^\epsilon_0, \u^\epsilon_0,\theta_0^\epsilon,  I^\epsilon_{00},
\mathbf{I}_{10}^\epsilon\in H^{s}(\Omega )$ in Theorem \ref{th} are required to be changed into $\rho^\epsilon_0-\check{\rho}, \u^\epsilon_0,\theta_0^\epsilon-\check{\theta},  I^\epsilon_{00}-\check{\theta}^4,\mathbf{I}_{10}^\epsilon\in H^{s}(\Omega )$ accordingly.
The corresponding proof is essentially unchanged and can be modified in a direct way.
\end{rem}
\begin{rem}
As a consequence of our result, we  obtain the local existence of solutions to the primitive system \eqref{radya}--\eqref{radye},
 and the life-span of which is  independent of $\epsilon$.
 Furthermore, the inequality \eqref{iivda} implies that the sequences $(\rho^\epsilon, \u^\epsilon,\theta^\epsilon)$
  converge strongly to $(\rho^0,\u^0,\theta^0)$ in $L^\infty(0,T; H^{s}(\Omega ))$ and
  $(I^\epsilon_{0}, \mathbf{I}_{1}^\epsilon)$ converge strongly
  to \linebreak
  $((-\Delta)^{-1}\dv\mathbf{q}^0,\mathbf{q}^0)$ in $L^\infty(0,T; H^{s}(\Omega ))$  but with different convergence rates.
\end{rem}

\begin{rem}
  In the local existence for the problem \eqref{radza}--\eqref{radze}, the regularity assumption on initial data
  $(\rho^0_0,\u^0_0,\theta^0_0,\mathbf{q}^0_0)\in H^s(\Omega )$ for $s>n/2+2$ is in fact sufficient.
  Here we have added more regularity assumption in Proposition \ref{Pa} in order to
  obtain more regular solutions which are needed in the proof of Theorem \ref{th}.
\end{rem}

\begin{rem}
\label{rek1.4}
The viscosity and heat conductivity terms
in the system \eqref{radya}--\eqref{radye} play a crucial role in our uniformly bounded  estimates
(in order to control some undesirable higher-order terms).
For the case  of $\lambda=\mu=\kappa=0$, we should use the quasilinear symmetric structure of the
hyperbolic part in (\ref{raderror1})--(\ref{raderror4}). That is, the positively definite matrix $\A^\epsilon$ in (\ref{FXFXFX}) is essentially used in the energy estimation.
By using arguments similar to those in showing boundedness of high norms in Chapter 2 in \cite{Majda}, the main Theorem \ref{th} is also valid for the Euler-P1 system. Precisely speaking, the smooth solutions to
(\ref{radya})--(\ref{radye}) with $\mu=\lambda=\kappa=0$ converge to the smooth solutions to (\ref{radza})--(\ref{radzd}) with $\mu=\lambda=\kappa=0$. The limit equations (\ref{radza})--(\ref{radzd}) with $\mu=\lambda=\kappa=0$ are indeed considered in  \cite{KNY,KNY1,LMS,LCG,L,RX,WJX,WJX1}. In this case the local existence of solutions to limit system is referred to \cite{KNY,KNY1}. Here we mention the related nonrelativistic limit process for a simplified model in radiation hydrodynamics in \cite{RY}.

\end{rem}

We give some comments on the proof of Theorem \ref{th}.
 The main difficulty in dealing with our non-relativistic limit is the oscillatory behavior of $I^\epsilon_0$ and $\mathbf{I}^\epsilon_1$.
The time derivatives of $I_0^\epsilon$ and $\mathbf{I}_1^\epsilon$ in \eqref{radyd}--\eqref{radye} are multiplied by a small parameter, hence the uniform energy estimates are obtained from the relaxation terms rather than from the time-derivative terms.
Besides the singularity in \eqref{radyd}--\eqref{radye},
there exists an extra singularity caused by the coupling of
$I^\epsilon_0$ and $\mathbf{I}^\epsilon_1$  in the
momentum and temperature equations.
 In this paper, we shall overcome all these difficulties by adopting and modifying the elaborate nonlinear energy method developed in
\cite{JL,JL2}.
First, we derive the error system \eqref{raderror1}--\eqref{raderror4} by  utilizing  the original system  \eqref{radya}--\eqref{radye}
and the limit system \eqref{radza}--\eqref{radzd}. In this step, we need to find the suitable quantities from the limit system, which are related to $I^\epsilon_0$ and $\mathbf{I}^\epsilon_1$. Next, we study the estimates of $H^s$-norm to the error system.
 To do so, we shall make full use of  the special structure of the error system, the Sobolev imbedding and the Moser-type
inequalities, and the regularity of the limit equations. In particular, a very refined analysis is carried out to
deal with the higher order nonlinear terms in the system \eqref{raderror1}--\eqref{raderror4}. It is noted that the damping terms in equations \eqref{raderror3}--\eqref{raderror4} also play a crucial role in controlling the nonlinear coupled terms.
Finally, we combine these obtained estimates and apply the Gronwall inequality to get the desired results.
In addition, we should remark that for fixed $\epsilon$,
the global in time existence of solutions to the barotropic case of the equations \eqref{radya}--\eqref{radye}
is achieved in the critical Besov spaces by Danchin and Ducomet recently in \cite{RB}.
 As is pointed out in \cite{MM,P} that the energy exchange between the hydrodynamics and the radiation field sometime plays a leading role.
This is the key reason why we include the energy equation into the system \eqref{radya}--\eqref{radye}. In this way it will greatly increase difficulties of mathematical analysis.  To our best knowledge, for fixed $\epsilon$ the global existence of strong solutions to the equations \eqref{radya}--\eqref{radye} is still open, because the analysis of the spectrum for the linearized system is very complicated and is left for our future study.

\medskip
Before ending this introduction, we give some notations and recall some basic facts which
will be frequently used throughout this paper.

(1) We denote by $\langle \cdot,\cdot\rangle$ the standard inner product in $L^2(\Omega )$
with $\langle f,f\rangle=\|f\|^2$, by
$H^k$ the standard Sobolev space $W^{k,2}$ with norm $\|\cdot\|_{k}$.  The notation $\|(A_1,A_2, \dots,
 A_l)\|_k$ means the summation of $\|A_i\|_k$ from $i=1$ to $i=l$.
For a multi-index $\alpha = (\alpha_1,  \dots, \alpha_n)$,  we denote
$\partial_x^\alpha =\partial^{\alpha_1}_{x_1}\cdots\partial^{\alpha_n}_{x_n}$ and
$|\alpha|=|\alpha_1|+\cdots+|\alpha_n|$. For an integer $m$, the symbol $D^m_x$ denotes
the summation of all terms $\partial_x^\alpha$ with the multi-index $\alpha$ satisfying $|\alpha|=m$. We use $C_i$,
$\delta_i$, $K_i$, and $K$ to denote the constants which are independent of
$\epsilon$ and may change from line to line. We also omit the  spatial domain $\Omega $
in integrals for convenience.

(2) We shall frequently use the following Moser-type calculus
inequalities (see \cite{KM1}):

\hskip 4mm (i)\ \ For $f,g\in H^s(\Omega )\cap L^\infty(\Omega )$ and $|\alpha|\leq
s$, $s>n/2$, it holds that
\begin{align}\label{ma}
\|\partial^\alpha_x(fg)\| \leq C_s(\|f\|_{L^\infty}\|D^s_x
g\| +\|g\|_{L^\infty}\|D^s_x f\|).% \label{MI1}
\end{align}

\hskip 4mm (ii)\ \ For $f\in H^s(\Omega ), D_x^1 f\in L^\infty(\Omega ), g\in H^{s-1}(\Omega )\cap
L^\infty(\Omega )$ and $|\alpha|\leq s$, $s>n/2+1$, it holds that
\begin{align}\label{mb}
\quad  \ \ \|\partial^\alpha_x(fg)-f \partial^\alpha_xg\|\leq
C_s(\|D^1_x f\|_{L^\infty}\|D^{s-1}_x g\| +\|g\|_{L^\infty}\|D^s_xf\|). %\label{MI2}
\end{align}

(3) Let $s> n/2$, $f\in C^s(\Omega )$, and  $u\in H^s(\Omega )$, then for each multi-index $\alpha$, $1\leq |\alpha| \leq s$, we have
(\cite{Mo,KM1}):
\begin{align}\label{mo}
   \|\partial^\alpha_x (f(u))\| \leq C(1+\|u\|_{L^\infty}^{|\alpha|-1})\|u\|_{|\alpha|}.
   \end{align}
Moreover, if $f(0)=0$, then (\cite{Ho97})
\begin{align}\label{ho}
 \|\partial^\alpha_x (f(u))\| \leq C( \|u\|_s)\|u\|_s.
\end{align}
%%\medskip

This paper is organized as follows.
  In Section \ref{S2}, we utilize the primitive system \eqref{radya}--\eqref{radye} and the
target system \eqref{radza}--\eqref{radzd} to  derive the  error
system and state the local existence of the solution.
  In Section \ref{S3} we give the a priori energy
estimates of the error system  and present  the proof of Theorem \ref{th}.

\section{Derivation of the  error system} \label{S2}

In this section  we first derive the error system from the original
system \eqref{radya}--\eqref{radye} and the limiting equations
\eqref{radza}--\eqref{radzd}, then we state the local existence of solution to
this error system.

Setting $N^\epsilon=\rho^\epsilon-\rho^0,   \U^\epsilon=\u^\epsilon-\u^0, \Theta^\epsilon=\theta^\epsilon-\theta^0,
\F=I_0^\epsilon-(-\Delta)^{-1}\dv \mathbf{q}^0$, and $ \G = \mathbf{I}^{\epsilon}_1-\mathbf{q}^0$, and utilizing
      the system
\eqref{radya}--\eqref{radye} and the system \eqref{radza}--\eqref{radzd}, we
  obtain that
 \begin{align}
 &  \partial_t N^\epsilon  +(N^\epsilon+\rho^0)\dv \U^\epsilon+(\U^\epsilon+\u^0)\cdot\nabla N^\epsilon
 =-N^\epsilon \dv \u^0-\nabla \rho^0\cdot \U^\epsilon, \label{raderror1} \\
 & \partial_t\U^\epsilon  +[(\U^\epsilon+\u^0)\cdot \nabla]\U^\epsilon
 +\nabla \Theta^\epsilon+ \frac{\Theta^{\epsilon}+\theta^0}{N^\epsilon+\rho^0}\nabla N^\epsilon -\frac{1}{N^\epsilon+\rho^0}\dv\Psi(\U^\epsilon)\nonumber\\
  & \quad\qquad   =-(\U^\epsilon \cdot \nabla )\u^0-
                  \left[\frac{\Theta^{\epsilon}+\theta^0}{N^\epsilon+\rho^0}-\frac{\theta^0}{\rho^0}\right]\nabla  \rho^0
                       +\left[\frac{1}{N^\epsilon+\rho^0}-\frac{1}{\rho^0}\right]\dv\Psi(\u^0)\nonumber\\
 &   \qquad \qquad          +\frac{\epsilon}{N^\epsilon+\rho^0}(  \G +  \mathbf{q}^0),    \label{raderror2}\\
 & \partial_t\Theta^\epsilon  +[(\U^\epsilon+\u^0)\cdot \nabla]\Theta^\epsilon
 + (\Theta^\epsilon+\theta^0)\, \dv \U^\epsilon-\frac{\kappa}{N^\epsilon+\rho^0}\Delta \Theta^\epsilon\nonumber\\
  & \quad\qquad   =-(\U^\epsilon \cdot \nabla )\theta^0  -\Theta^\epsilon \dv \u^0
    +\left[\frac{\kappa}{N^\epsilon+\rho^0}-\frac{\kappa}{\rho^0}\right]\Delta \theta^0    \nonumber\\
    &\qquad \qquad + \frac{ 2\mu}{N^\epsilon+\rho^0} |\mathbb{D}(\U^\epsilon)|^2
    +\frac{\lambda}{N^\epsilon+\rho^0}|\mbox{tr}\mathbb{D}(\U^\epsilon)|^2\nonumber\\
      &\qquad \qquad
   + \frac{ 4\mu}{N^\epsilon+\rho^0}\mathbb{D}(\U^\epsilon): \mathbb{D}(\u^0)
     + \frac{ 2\lambda}{N^\epsilon+\rho^0}\,[\mbox{tr}\mathbb{D}(\U^\epsilon) \mbox{tr}\mathbb{D}(\u^0)]\nonumber\\
      &\qquad \qquad + \left[\frac{2\mu}{N^\epsilon+\rho^0}-\frac{2\mu}{\rho^0}\right]|\mathbb{D}(\u^0)|^2
      +\left[\frac{\lambda}{N^\epsilon+\rho^0}-\frac{\lambda}{\rho^0}\right](\mbox{tr}\mathbb{D}(\u^0))^2\nonumber\\
    &\qquad \qquad +\frac{1}{N^\epsilon+\rho^0}\big\{\F-(\Theta^\epsilon)^4-4(\Theta^\epsilon)^3\theta^0
    -6(\Theta^\epsilon)^2(\theta^0)^2-4\Theta^\epsilon(\theta^0)^3\big\} \nonumber\\
        &\qquad \qquad  -\left[\frac{1}{N^\epsilon+\rho^0}-\frac{1}{\rho^0}\right] \dv \mathbf{q}^0,      \label{raderror22}\\
 %%%%%%%%%%%%%%%%%%%%%%%%%%%%%%%%%%%
  & \epsilon \partial_t \F+\dv \G
 = (\Theta^\epsilon)^4+4(\Theta^\epsilon)^3\theta^0
    +6(\Theta^\epsilon)^2(\theta^0)^2+4\Theta^\epsilon(\theta^0)^3\nonumber\\
     &\qquad \qquad\qquad \quad \ \   -  \F -\epsilon \partial_t(-\Delta)^{-1}\dv \mathbf{q}^0, \label{raderror3}\\
 & \epsilon\partial_t \G +\nabla \F=-\G-\epsilon\partial_t\mathbf{q}^0,  \label{raderror4}
 \end{align}
 with initial data
 \begin{align}\label{raderror5}
( N^\epsilon,\U^\epsilon,\Theta^\epsilon,\F,\G )|_{t=0} &\!=
 ( N^\epsilon_0,\U^\epsilon_0,\Theta^\epsilon_0,J^\epsilon_{00},\mathbf{J}^\epsilon_{10})\nonumber\\
  &\!= \big(\rho^\epsilon_0-\rho^0_0, \u^\epsilon_0-\u^0_0, \theta^\epsilon_0-\theta^0_0,
   I^\epsilon_0- (-\Delta)^{-1} \dv\mathbf{q}^0_0, \mathbf{I}^\epsilon_{10}-\mathbf{q}^0_0\big).
 \end{align}
We remark that in \eqref{raderror4} we have used the fact that
\begin{align*}
   \mathbf{q}^0=\nabla \dv \mathbf{q}^0-\nabla (\theta^0)^4=\Big(\frac{-\Delta}{I-\Delta}-I\Big)\nabla (\theta^0)^4
   =\frac{-\nabla}{I-\Delta}(\theta^0)^4
\end{align*}
is a gradient.

 Denote
 \begin{align*}
 &\W^\epsilon=\left(\begin{array}{c}
                   N^\epsilon \\
                    \U^\epsilon\\
                    \Theta^\epsilon\\
                   \F\\
                    \G
                  \end{array}\right),
                  \ \
       \W^\epsilon_0=\left(\begin{array}{c}
                     N^\epsilon_0 \\
                    \U^\epsilon_0\\
                     \Theta^\epsilon_0\\
                     J^\epsilon_{00}\\
                     \mathbf{J}^\epsilon_{00}\\
                  \end{array}\right), \ \
   \D^\epsilon=\left(\begin{array}{cc}
                   \mathbb{I}_{n+2} & \mathbf{0} \\
                   \mathbf{0} & \left(\begin{array}{cc}
                    \epsilon   & \mathbf{0}\\
                    \mathbf{0} &  \epsilon\mathbb{I}_{n}
                    \end{array}
                    \right)
                  \end{array}\right), \\
 &   \A^\epsilon_i=\left(\begin{array}{cc}
                  \left(\begin{array}{ccc}
                   (\U^\epsilon+\u^0)_i & (N^\epsilon+\rho^0) e^\mathrm{T}_i & 0  \\
                    \frac{\Theta^\epsilon+\theta^0}{N^\epsilon+\rho^0}e_i & (\U^\epsilon+\u^0)_i \mathbb{I}_{n} & e_i\\
                    0 & (\Theta^\epsilon+\theta^0)e^\mathrm{T}_i & (\U^\epsilon+\u^0)_i
                    \end{array}\right)   & \mathbf{0} \\
                   {0} &  \left(\begin{array}{cc}
                     {0}& e_{i}^\mathrm{T} \\
                    e_{i}  & \mathbf{0}
                    \end{array}
                    \right)
                  \end{array}\right), \\
  &         \A^\epsilon_{ij} =  \left(\begin{array}{cccc}
                                    {0} & \mathbf{0}& \mathbf{0}& \mathbf{0}\\
                              \mathbf{0}&   \frac{\mu}{N^\epsilon+\rho^0}( e_ie_j^\mathrm{T}\mathbb{I}_{n}+e_i^\mathrm{T}e_j)
                                +  \frac{\lambda}{N^\epsilon+\rho^0}e^{\mathrm{T}}_je_i &\mathbf{0}& \mathbf{0} \\
                                     {0} &   \mathbf{0}&\frac{\kappa}{N^\epsilon+\rho^0} e_ie_j^\mathrm{T}&   \mathbf{0}\\
                     \mathbf{0}  &   \mathbf{0} &   \mathbf{0}&  \mathbf{0}\\
                    \end{array}
                    \right),\\
  &\s^\epsilon(\W^\epsilon)=\left(\begin{array}{c}
                   -N^\epsilon \dv \u^0-\nabla \rho^0\cdot \U^\epsilon\\
                     {\R}^\epsilon_1\\
                    R^\epsilon_2\\
                    R^\epsilon_3\\
                  {-\G-\epsilon\partial_t\mathbf{q}^0}
                    \end{array}
                    \right),
 \end{align*}
where $\R^\epsilon_1, R^\epsilon_2$, and $ R^\epsilon_3$ denote the right-hand side of
\eqref{raderror2}, \eqref{raderror22}, and \eqref{raderror3}, respectively;
 $(e_1, \dots, e_n)$ is the canonical basis of $\mathbb{R}^n$
 %$\mathbb{I}_{d}$ ($d = 3,5$)  is a $d\times d$ unit matrix,
and $y_i$ denotes the $i$-th component of $y\in \mathbb{ R}^n$.

Using these notations we can rewrite the problem \eqref{raderror1}--\eqref{raderror5} in the form:
\begin{align}\label{raderror6}
  \left\{\begin{aligned}
&  \D^\epsilon \partial_t \W^\epsilon +\sum^{n}_{i=1}\A^\epsilon_i \partial_{x_i}\W^\epsilon
  +\sum^{n}_{i,j=1}\A^\epsilon_{ij}\partial_{x_ix_j}\W^\epsilon=\s^\epsilon(\W^\epsilon),\\
 & \W^\epsilon|_{t=0}= \W^\epsilon_0.
 \end{aligned} \right.
\end{align}
It is not difficult to see that the system for $\W^\epsilon$  in
\eqref{raderror6} can be reduced to  a quasilinear symmetric
hyperbolic-parabolic one. In fact, if we introduce
\begin{align}
\label{FXFXFX}
\A^\epsilon=\left(\begin{array}{cc}
                  \left(\begin{array}{ccc}
                   \frac{\Theta^\epsilon+\theta^0}{(N^\epsilon+\rho^0)^2} & \mathbf{0} & 0  \\
                   \mathbf{0} &  \mathbb{I}_{n} & \mathbf{0}\\
                   {0} & \mathbf{0} & \frac{1}{\Theta^\epsilon+\theta^0}\\
                                       \end{array}\right)   & \mathbf{0} \\
                    \mathbf{0} &   \mathbb{I}_{n+1}
                                   \end{array}\right),
\end{align}
which is positively definite when $\|N^\epsilon\|_{L^\infty_T L^\infty_x}\leq
\hat \rho/2$ and   $\|\Theta^\epsilon\|_{L^\infty_T L^\infty_x}\leq
\hat \theta/2$,
 then $\tilde{\A}_0^\epsilon=\A^\epsilon\D^\epsilon$
 is positive symmetric and $\tilde{\A}_i^\epsilon = \A^\epsilon\A^\epsilon_i$ are symmetric on $[0,T]$ for all $1\leq i\leq n.$
 Moreover, the assumptions that $\mu>0, 2\mu+n\lambda>0$, and $\kappa>0$ imply that
 $$
 \mathcal{A}^\epsilon=\sum^{n}_{i,j=1}\A^\epsilon\A^\epsilon_{ij}\partial_{x_ix_j}\W^\epsilon
 $$
 is an elliptic operator.
  Thus, for fixed $\epsilon>0$, we can apply the result  of Vol'pert and
Hudiaev~\cite{VH}    to obtain   the following local existence for the problem \eqref{raderror6}.

\begin{prop} \label{Pb}
Let $s>n/2+2 $ be an integer and $(\rho^0, \u^0, \theta^0,  \mathbf{q}^0)$ satisfy the conditions in Proposition \ref{Pa}.
 Assume that the initial data $(N^\epsilon_0, \U^\epsilon_0, \Theta^\epsilon_0, J_{00}^\epsilon, \mathbf{J}_{10}^\epsilon)$ satisfy
\begin{gather*}
  N^\epsilon_0, \U^\epsilon_0,\Theta^\epsilon_0,  J_{00}^\epsilon, \mathbf{J}_{10}^\epsilon\in
  H^s(\Omega )  \ \text{and} \ \ \|(N^\epsilon_0, \U^\epsilon_0, \Theta^\epsilon_0, J_{00}^\epsilon, \mathbf{J}_{10}^\epsilon)\|\leq \delta
\end{gather*}
  for some small constant $\delta>0$.
Then there exist positive constants $T^\epsilon\,(0<T^\epsilon\leq +\infty)$
 and $K$, such that the Cauchy problem
\eqref{raderror6} has a unique classical solution $(N^\epsilon,
\U^\epsilon, \Theta^\epsilon, J_0^\epsilon, \G )$ satisfying
\begin{gather*}
 N^\epsilon, J_0^\epsilon, \G  \in
C^l([0,T^\epsilon),H^{s-l}), \
\U^\epsilon, \Theta^\epsilon \in  C^l([0,T^\epsilon),H^{s-2l}),\  l=0,1; \\
  \|(N^\epsilon(t),U^\epsilon(t),\Theta^\epsilon(t),J_{0}^\epsilon(t),\mathbf{J}_{1}^\epsilon(t))\|_{s}\leq
   K\delta \ \ \ \ t\in [0,T^\epsilon).
  \end{gather*}
 \end{prop}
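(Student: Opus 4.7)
The plan is to verify that the system \eqref{raderror6} fits into the framework of quasilinear symmetric hyperbolic-parabolic systems studied by Vol'pert and Hudiaev \cite{VH}, and then directly invoke their local existence theorem. Most of the structural work has already been laid out in the paragraph preceding the statement; what remains is to assemble the pieces and verify the hypotheses carefully for each fixed $\epsilon>0$.

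First I would fix $\epsilon>0$ and introduce the symmetrizer $\A^\epsilon$ from \eqref{FXFXFX}. Since Proposition \ref{Pa} guarantees $\rho^0\geq \hat\rho>0$ and $\theta^0\geq \hat\theta>0$, and since the Sobolev embedding $H^s\hookrightarrow L^\infty$ (valid because $s>n/2+2$) combined with the smallness assumption $\|(N^\epsilon_0,\U^\epsilon_0,\Theta^\epsilon_0,J_{00}^\epsilon,\mathbf{J}_{10}^\epsilon)\|_s\leq \delta$ provides $\|N^\epsilon_0\|_{L^\infty}+\|\Theta^\epsilon_0\|_{L^\infty}\leq C\delta$, choosing $\delta$ small enough yields $\|N^\epsilon_0\|_{L^\infty}\leq \hat\rho/4$ and $\|\Theta^\epsilon_0\|_{L^\infty}\leq \hat\theta/4$. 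Hence $\A^\epsilon|_{t=0}$ is positive definite, uniformly in small perturbations, and by continuity these bounds persist on a short time interval $[0,T^\epsilon)$.

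Second, I would multiply the system by $\A^\epsilon$ on the left. As recorded in the text, $\tilde{\A}^\epsilon_0=\A^\epsilon\D^\epsilon$ is symmetric positive definite and $\tilde{\A}^\epsilon_i=\A^\epsilon\A^\epsilon_i$ is symmetric for each $i=1,\dots,n$, while the assumptions $\mu>0$, $2\mu+n\lambda>0$, and $\kappa>0$ make the second-order operator $\mathcal{A}^\epsilon=\sum_{i,j}\A^\epsilon\A^\epsilon_{ij}\partial_{x_ix_j}$ uniformly elliptic on the $(\U^\epsilon,\Theta^\epsilon)$-block (with zero contribution on the $(N^\epsilon,\F,\G)$-block). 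Thus the symmetrized system has precisely the composite symmetric-hyperbolic/second-order-parabolic form required by \cite{VH}. Existence is then obtained by the standard iterative linearization: given $\W^{\epsilon,k}$, one defines $\W^{\epsilon,k+1}$ as the solution of the linear problem with $\W^{\epsilon,k}$ frozen in all coefficients and in $\s^\epsilon$; classical theory produces $\W^{\epsilon,k+1}$ with the stated regularity, and the Moser calculus inequalities \eqref{ma}--\eqref{mo} together with the bound on $\mathbf{q}^0$ from Proposition \ref{Pa} applied to the source $\s^\epsilon$ yield a uniform-in-$k$ estimate $\|\W^{\epsilon,k}(t)\|_s\leq K\delta$ on some interval $[0,T^\epsilon)$. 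Convergence of the iterates in a weaker norm produces the desired solution, and the standard energy argument on the difference of two solutions gives uniqueness.

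The main technical point deserving attention is tracking the dependence on $\epsilon$: the matrix $\D^\epsilon$ degenerates in its $(\F,\G)$-block as $\epsilon\to 0$, and the source $\s^\epsilon$ contains $\epsilon\partial_t \mathbf{q}^0$ and $\epsilon\partial_t(-\Delta)^{-1}\dv\mathbf{q}^0$ which are controlled via Proposition \ref{Pa}. Because this proposition concerns only fixed-$\epsilon$ existence, it is acceptable here that the lifespan $T^\epsilon$ and the constant $K$ depend on $\epsilon$; the delicate $\epsilon$-uniform version is precisely the content of Theorem \ref{th} and is deferred to Section \ref{S3}. With the $\epsilon$-dependence unconstrained, the application of \cite{VH} is essentially routine once the symmetric-parabolic structure is identified, making this the shortest step of the paper.
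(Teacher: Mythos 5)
Your proposal matches the paper's treatment: the paper also fixes $\epsilon>0$, symmetrizes with the matrix $\A^\epsilon$ from \eqref{FXFXFX} (positive definite under the smallness of $N^\epsilon$ and $\Theta^\epsilon$), notes the symmetry of $\A^\epsilon\D^\epsilon$ and $\A^\epsilon\A^\epsilon_i$ and the ellipticity of the second-order block, and then simply invokes Vol'pert--Hudiaev for local existence, with the $\epsilon$-dependence of $T^\epsilon$ left unconstrained. Your added sketch of the linearization iteration is just the standard content of that cited result, so the argument is essentially identical.
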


 Note that if  $\|N^\epsilon\|_{L^\infty_T L^\infty_x}\leq
\hat \rho/2$ and   $\|\Theta^\epsilon\|_{L^\infty_T L^\infty_x}\leq \hat \theta/2$,
then for smooth solutions, the   system \eqref{radya}--\eqref{radye} with initial data \eqref{radyf} are equivalent to
\eqref{raderror1}--\eqref{raderror5} or \eqref{raderror6} on $[0,T]$, $T<\min\{T^\epsilon,T_*\}$.
Usually, the life-span $T^\epsilon$ depends on $\epsilon$ and may shrink to zero as  as $\epsilon\to 0$.
 Therefore, in order to avoid this situation and to obtain the convergence of
  system \eqref{radya}--\eqref{radye} to the system \eqref{radza}--\eqref{radzd},
  we only need to establish the uniform decay estimates with respect
  to the parameter $\epsilon$ of the solution to the error system \eqref{raderror6}.
This will be achieved by the elaborate energy method presented in next section.

%\newpage
%-----------------section  -------------------------------------------------------------
%\renewcommand{\theequation}{\thesection.\arabic{equation}}
%\setcounter{equation}{0}
\section{Uniform energy estimates and proof of Theorem \ref{th}} \label{S3}
 In this section we derive the uniform decay estimates with respect to the parameter
  $\epsilon$ of the solution to the problem \eqref{raderror6} and
  justify rigorously the convergence of the system \eqref{radya}--\eqref{radye} to the  system \eqref{radza}--\eqref{radzd}.
Here we adopt and modify some techniques developed in \cite{JL,JL2} and put main efforts on the estimates
of higher order nonlinear terms.

We first establish the convergence rate of the error system by establishing the \emph{a priori} estimates
uniformly in $\epsilon$. For conciseness of presentation, we define
\begin{align*}
 &\|\mathcal{E}^\epsilon(t)\|^2_s\    := \|(N^\epsilon,\U^\epsilon,\Theta^\epsilon )(t)\|^2_{s},\\
 &\v \mathcal{E}^\epsilon(t)\v ^2_s :=\|\mathcal{E}^\epsilon(t)\| ^2_s+ {\epsilon }\Vert
(\F, \G)(t)\Vert^2_{s},\\
& \v\mathcal{E}^\epsilon\v_{s,T}\ : =\sup_{0<t<T}\v\mathcal{E}^\epsilon(t)\v_s.
\end{align*}

The crucial estimate of this paper is the following decay   result on the error system
\eqref{raderror1}--\eqref{raderror4}.

\begin{prop}\label{P31}
   Let $s>n/2+2$ be an integer and assume that the initial data  $( N^\epsilon_0,\U^\epsilon_0,\Theta^\epsilon_0, J^\epsilon_{00},\G _0)$ satisfy
\begin{align}\label{ww}
\|(N^\epsilon_0,\U^\epsilon_0,\Theta^\epsilon_0) \|^2_{s}+ \epsilon \Vert
( J_{00}^\epsilon, \mathbf{J}_{10}^\epsilon) \Vert^2_{s} =\v \mathcal{E}^\epsilon(t=0)\v^2_{s} \leq M_0{\epsilon}^2
 \end{align}
for sufficiently small $\epsilon$ and   some constant $M_0>0$   independent of $\epsilon$.
Then, for any $T_0\in (0, T_*)$, there exist
  two constants $ M_1 > 0$  and $\epsilon_1 > 0$  depending only on $T_0$, such that
for all $\epsilon\in (0,\epsilon_1]$, it holds that $T^\epsilon\geq T_0$
and the solution $( N^\epsilon, \U^\epsilon,\Theta^\epsilon, \F, \G )$ of the problem
\eqref{raderror1}--\eqref{raderror5}, well-defined in $[0, T_0]$, enjoys that
 \begin{align}\label{www}
   \v \mathcal{E}^\epsilon\v _{s,T_0} \leq M_1 {\epsilon}.
 \end{align}
   \end{prop}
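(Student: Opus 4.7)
The plan is to carry out a bootstrap/continuation argument for the symmetric hyperbolic--parabolic form \eqref{raderror6} using the nonlinear energy method. Assume the error solution exists on $[0,T]$ with $T\le\min\{T^\epsilon,T_0\}$ and satisfies the bootstrap bound $\v\E^\epsilon\v_{s,T}\le M_1\epsilon$ for a constant $M_1$ to be fixed large but independent of $\epsilon$. By Proposition \ref{Pb} and standard continuation, it is enough to improve this to $\v\E^\epsilon\v_{s,T}\le \tfrac12 M_1\epsilon$, which then extends validity up to $t=T_0$. The uniform $H^{s+2}$ bounds for $(\rho^0,\u^0,\theta^0,\mathbf{q}^0)$ from Proposition \ref{Pa} will be used freely throughout.

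For each $|\alpha|\le s$, apply $\pa$ to the system premultiplied by the symmetrizer $\A^\epsilon$ of \eqref{FXFXFX}, and pair with $\pa\W^\epsilon$ in $L^2$. The time term becomes $\tfrac12\frac{d}{dt}\langle\A^\epsilon\D^\epsilon\pa\W^\epsilon,\pa\W^\epsilon\rangle$, controlling $\|\pa(N^\epsilon,\U^\epsilon,\Theta^\epsilon)\|^2+\epsilon\|\pa(\F,\G)\|^2$ since $\A^\epsilon\D^\epsilon$ is symmetric positive on the bootstrap set. The convective terms integrate by parts to $\tfrac12\langle\dv(\A^\epsilon\A^\epsilon_i)\pa\W^\epsilon,\pa\W^\epsilon\rangle$ plus Moser commutators bounded by $\v\E^\epsilon\v_s^2$ via \eqref{mb}, while the elliptic block $\mathcal{A}^\epsilon$ yields the dissipation $c(\|\pa\nabla\U^\epsilon\|^2+\|\pa\nabla\Theta^\epsilon\|^2)$ up to harmless commutators with $1/(N^\epsilon+\rho^0)$. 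The key structural observation is the radiation cross-cancellation
\begin{equation*}
\langle\pa\nabla\F,\pa\G\rangle+\langle\pa\dv\G,\pa\F\rangle=0,
\end{equation*}
so the relaxation terms in \eqref{raderror3}--\eqref{raderror4} contribute the \emph{full} dissipation $-\|\pa(\F,\G)\|^2$, \emph{without} the $\epsilon$ weight that appears on the energy side.

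Controlling the source side is then bookkeeping. The most sensitive term is the coupling $\epsilon(\G+\mathbf{q}^0)/(N^\epsilon+\rho^0)$ in \eqref{raderror2}: pairing with $\pa\U^\epsilon$ and using \eqref{ma} gives
\begin{equation*}
\epsilon\bigl|\langle\pa(\G/(N^\epsilon+\rho^0)),\pa\U^\epsilon\rangle\bigr|\le \delta\|\G\|_s^2+C\delta^{-1}\epsilon^2\|\U^\epsilon\|_s^2,
\end{equation*}
where the first summand is absorbed by the relaxation dissipation and the second feeds Gronwall at $O(\epsilon^2)$. The inhomogeneities $\epsilon\p_t\mathbf{q}^0$ and $\epsilon\p_t(-\Delta)^{-1}\dv\mathbf{q}^0$ are $O(\epsilon)$ in $H^s$ by Proposition \ref{Pa} (using elliptic smoothing for the latter, well-defined since $\mathbf{q}^0$ is a gradient) and contribute $O(\epsilon^2)$ after Cauchy--Schwarz. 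The radiation nonlinearity $(\Theta^\epsilon)^4+4(\Theta^\epsilon)^3\theta^0+6(\Theta^\epsilon)^2(\theta^0)^2+4\Theta^\epsilon(\theta^0)^3$ is bounded in $H^s$ by $C\|\Theta^\epsilon\|_s$ via \eqref{ma} and \eqref{mo}, with a constant depending on $\|\theta^0\|_s$ and the bootstrap norm. The quadratic viscous terms $|\mathbb{D}(\U^\epsilon)|^2$, $\mathbb{D}(\U^\epsilon):\mathbb{D}(\u^0)$, etc., are handled by $\|\U^\epsilon\|_s\|\nabla\U^\epsilon\|_s$ plus smooth data, with the $\|\nabla\U^\epsilon\|_s^2$ portion absorbed into viscous dissipation via Young. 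All terms of the form $[1/(N^\epsilon+\rho^0)-1/\rho^0]\times(\text{smooth})$ gain a factor of $N^\epsilon$ through a mean-value expansion, hence are linear in $\v\E^\epsilon\v_s$.

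Summing over $|\alpha|\le s$ yields the closed differential inequality
\begin{equation*}
\frac{d}{dt}\v\E^\epsilon(t)\v_s^2+c\bigl(\|\nabla\U^\epsilon\|_s^2+\|\nabla\Theta^\epsilon\|_s^2+\|(\F,\G)\|_s^2\bigr)\le C_1\v\E^\epsilon(t)\v_s^2+C_2\epsilon^2,
\end{equation*}
with $C_1,C_2$ depending on $T_0$, $M_1$, and the $H^{s+2}$ norm of the limiting profile. Gronwall applied to \eqref{ww} gives $\v\E^\epsilon(t)\v_s^2\le (M_0+C_2T_0)e^{C_1T_0}\epsilon^2$, and the bootstrap closes upon choosing $M_1^2\ge 4(M_0+C_2T_0)e^{C_1T_0}$ and $\epsilon_1$ small enough that $M_1\epsilon_1$ stays below the smallness threshold in Proposition \ref{Pb}. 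The hardest point is the double mismatch at the momentum--radiation interface: the radiation energy is weighted by $\epsilon$ while the source $\epsilon(\G+\mathbf{q}^0)/\rho^\epsilon$ carries only one factor of $\epsilon$, and the $\mathbf{q}^0$ piece is genuinely inhomogeneous. Both are rescued precisely because the relaxation dissipation $\|\G\|_s^2$ is \emph{not} $\epsilon$-weighted (via the structural cancellation above) and because $\mathbf{q}^0$ is smooth and $O(1)$, so $\epsilon\mathbf{q}^0$ is truly $O(\epsilon)$; this exact asymmetry between weights is what powers the $O(\epsilon)$ convergence rate.
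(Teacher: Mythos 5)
Your proposal is correct and follows essentially the same route as the paper: $H^s$ energy estimates on the error system \eqref{raderror1}--\eqref{raderror4}, with the decisive points being that the relaxation dissipation $\|(\F,\G)\|_s^2$ arrives \emph{without} the $\epsilon$-weight thanks to the cancellation $\langle\pa\nabla\F,\pa\G\rangle+\langle\pa\dv\G,\pa\F\rangle=0$, that the singular coupling $\epsilon\G/(N^\epsilon+\rho^0)$ in the momentum equation is absorbed into that dissipation at cost $O(\epsilon^2)$, and that the inhomogeneous sources $\epsilon\p_t\mathbf{q}^0$, $\epsilon\p_t(-\Delta)^{-1}\dv\mathbf{q}^0$ contribute $O(\epsilon^2)$, followed by Gronwall and continuation. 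The only packaging difference is that you premultiply by the symmetrizer $\A^\epsilon$ and run a single vector-valued estimate (which generates a $\langle\p_t(\A^\epsilon\D^\epsilon)\pa\W^\epsilon,\pa\W^\epsilon\rangle$ term you should acknowledge and control via the equations), whereas the paper estimates each equation separately and uses the viscous and heat-conduction dissipation to absorb the off-diagonal hyperbolic couplings such as $\nabla\Theta^\epsilon$ and the pressure gradient; both variants close.
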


Once this proposition is established, the proof of Theorem \ref{th} is a direct procedure. In fact, we have

\begin{proof}[Proof of Theorem \ref{th}]
Suppose that Proposition
\ref{P31} holds. According to the definition of the error functions $(N^\epsilon,\U^\epsilon,\Theta^\epsilon,\F,\G )$
and the regularity of $(\rho^0,\u^0,\theta^0,\mathbf{q}^0)$, the error system \eqref{raderror1}--\eqref{raderror4}  and the
primitive system \eqref{radya}--\eqref{radye} are equivalent on $[0,T]$ for some $T>0$.
Therefore  the assumption \eqref{ivda} in Theorem \ref{th} implies the assumption \eqref{ww} in Proposition
\ref{P31}, and hence  \eqref{www} gives \eqref{iivda}.
\end{proof}

Therefore, our main goal next is to prove Proposition \ref{P31} which can be approached by the following a priori
estimates. For some given  $\hat T<1$ and any $\tilde T<\hat T$ independent of $\epsilon$, we denote
$T \equiv  T_\epsilon = \min\{\tilde T, T^\epsilon \}$.

\begin{lem} Let the assumptions in Proposition \ref{P31} hold. Then, for all $0<t<T $ and sufficiently small $\epsilon$,
there exist two positive constants $\delta_1$ and $\delta_2$, such that
   \begin{align}\label{H2}
     &\v \mathcal{E}^\epsilon(t)\v ^2_s    +   \int^t_0\left\{\delta_1\|  \nabla \U^\epsilon\| ^2_{s}  +\delta_2\| \nabla \Theta^\epsilon\| ^2_{s}
+\frac{1}{4}\| \F\| ^2_{s}+\frac{1}{4}\| \G\| ^2_{s}\right\}(\tau)\dif\tau\nonumber\\
     \leq  &  \v \mathcal{E}^\epsilon(t=0)\v^2 _{s} + C\int^t_0\left
     \{(\| \mathcal{E}^\epsilon\| ^{2(s+1)}_{s}+1)\| \mathcal{E}^\epsilon\| ^2_{s}\right\}(\tau)\dif \tau
     +C\epsilon^2.
          \end{align}
\end{lem}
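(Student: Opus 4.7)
My plan is to carry out a weighted $H^s$ energy estimate directly on the symmetrized system \eqref{raderror6}. Because $\tilde{\A}_0^\epsilon=\A^\epsilon\D^\epsilon$ is positive symmetric, $\tilde{\A}_i^\epsilon=\A^\epsilon\A^\epsilon_i$ are symmetric, and the second-order part $\mathcal{A}^\epsilon$ is elliptic (the authors already verified these in Section 2), the natural quantity to differentiate in time is
\begin{align*}
E_s^\epsilon(t)\;=\;\tfrac12\sum_{|\alpha|\le s}\bigl\langle\tilde{\A}_0^\epsilon\,\pa \W^\epsilon,\,\pa \W^\epsilon\bigr\rangle,
\end{align*}
which, thanks to the positivity of $\tilde{\A}_0^\epsilon$ on the set where $\|N^\epsilon\|_{L^\infty},\|\Theta^\epsilon\|_{L^\infty}$ are small, is equivalent to $\v \mathcal{E}^\epsilon(t)\v_s^2$. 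I would then apply $\pa$ to \eqref{raderror6}, take $L^2$ inner product with $\A^\epsilon\pa \W^\epsilon$, and sum over $|\alpha|\le s$.

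\textbf{Where each dissipation comes from.} The symmetric hyperbolic cancellation handles the first-order part modulo a commutator $\sum_i[\pa,\A^\epsilon\A^\epsilon_i\partial_{x_i}]\W^\epsilon$ which, together with $\tfrac12\langle\partial_t\tilde{\A}_0^\epsilon\,\pa\W^\epsilon,\pa\W^\epsilon\rangle$, is bounded by $C(1+\|\mathcal{E}^\epsilon\|_s^{2(s+1)})\|\mathcal{E}^\epsilon\|_s^2$ via \eqref{ma}, \eqref{mb}, \eqref{mo}, \eqref{ho} and the $L^\infty$ bounds on $(\rho^0,\u^0,\theta^0,\mathbf{q}^0)$ from Proposition~\ref{Pa}. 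The elliptic operator $\mathcal{A}^\epsilon$, after integration by parts, produces the parabolic dissipation; using $\mu>0$, $2\mu+n\lambda>0$ and Korn-type estimates for $\dv\Psi(\U^\epsilon)$, plus $\kappa>0$ for the $\Delta\Theta^\epsilon$ term, I get exactly $\delta_1\|\nabla\U^\epsilon\|_s^2+\delta_2\|\nabla\Theta^\epsilon\|_s^2$ after absorbing cross terms with Young's inequality. The key relaxation mechanism is that the RHS of \eqref{raderror3} contains $-\F$ and the RHS of \eqref{raderror4} contains $-\G$; pairing $\pa\eqref{raderror3}$ with $\pa\F$ and $\pa\eqref{raderror4}$ with $\pa\G$ and using the factor $\epsilon$ in front of $\partial_t$ yields the $\frac12\frac{d}{dt}(\epsilon\|\pa\F\|^2+\epsilon\|\pa\G\|^2)+\|\pa\F\|^2+\|\pa\G\|^2$ balance, whence the $\tfrac14\|\F\|_s^2+\tfrac14\|\G\|_s^2$ dissipation.

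\textbf{The nonlinear source $\s^\epsilon(\W^\epsilon)$ and the $\epsilon^2$ remainder.} The residual forcing in $\R_1^\epsilon,R_2^\epsilon,R_3^\epsilon$ splits into: (i) linear-in-$\W^\epsilon$ terms (like $-N^\epsilon\dv\u^0$, $(\U^\epsilon\cdot\nabla)\u^0$, $\Theta^\epsilon\dv\u^0$), which yield $C\|\mathcal{E}^\epsilon\|_s^2$ after applying \eqref{ma}; (ii) quasilinear terms involving $\frac{1}{N^\epsilon+\rho^0}-\frac{1}{\rho^0}$ etc., which I expand using \eqref{mo}-\eqref{ho} with $f(0)=0$ to get bounds $C(\|\mathcal{E}^\epsilon\|_s)\|\mathcal{E}^\epsilon\|_s^2$; (iii) polynomial-in-$\Theta^\epsilon$ terms from the $\theta^4$ expansion, which by \eqref{mo} generate at most the power $\|\mathcal{E}^\epsilon\|_s^{2(s+1)}\|\mathcal{E}^\epsilon\|_s^2$ written in \eqref{H2}; and (iv) the genuinely $\epsilon$-small terms $\epsilon(\G+\mathbf{q}^0)/(N^\epsilon+\rho^0)$ in \eqref{raderror2} and the pure residuals $-\epsilon\partial_t(-\Delta)^{-1}\dv\mathbf{q}^0$, $-\epsilon\partial_t\mathbf{q}^0$ in \eqref{raderror3}-\eqref{raderror4}. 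For (iv) I would use Young's inequality to write $\epsilon|\langle\G,\pa\U^\epsilon\rangle|\le\eta\|\G\|_s^2+\frac{\epsilon^2}{4\eta}\|\U^\epsilon\|_s^2$ and absorb the first piece into the $\tfrac14\|\G\|_s^2$ already produced; the $\epsilon$-smooth residuals are bounded in $H^s$ uniformly in $\epsilon$ by the regularity of the limit solution (Proposition~\ref{Pa}), contributing the final $C\epsilon^2$.

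\textbf{Expected main obstacle.} The hardest part will be item (iii): the nonlinear coupling from the $-\F+(\theta^\epsilon)^4-(\theta^0)^4$ term in the temperature equation and the symmetric piece on the RHS of \eqref{raderror3}. These produce cubic and quartic factors $\Theta^\epsilon(\theta^0)^3,\ldots,(\Theta^\epsilon)^4$ that must be differentiated $s$ times; a naive Moser bound gives too many derivatives on $\Theta^\epsilon$ and risks losing regularity. I expect to need a careful splitting, estimating high-derivative factors in $L^2$ and low-derivative factors in $L^\infty$ via Sobolev embedding ($s>n/2+2$), and to pair the $\pa\F$ occurrence on both sides so that the contribution $\int\pa\F\cdot\pa(\Theta^\epsilon\cdot\text{polynomial in }\Theta^\epsilon,\theta^0)$ coming from \eqref{raderror3} cancels with the corresponding contribution from \eqref{raderror22} (arising from the bracket containing $\F-(\Theta^\epsilon)^4-\cdots$). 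Once this cancellation is identified, the leftover is again of the form $C(1+\|\mathcal{E}^\epsilon\|_s^{2(s+1)})\|\mathcal{E}^\epsilon\|_s^2$, and summing all contributions and integrating in time on $[0,t]$ produces \eqref{H2}.
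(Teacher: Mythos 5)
Your strategy is sound and would deliver \eqref{H2}, but it is a genuinely different route from the paper's. The paper does \emph{not} symmetrize: it applies $\pa$ to each of \eqref{raderror1}--\eqref{raderror4} separately and pairs with the plain $\pa N^\epsilon,\pa\U^\epsilon,\pa\Theta^\epsilon,\pa\F,\pa\G$ (no weight $\A^\epsilon$). The non-symmetric hyperbolic couplings that your symmetrizer is designed to cancel — e.g.\ $\langle(N^\epsilon+\rho^0)\pa\dv\U^\epsilon,\pa N^\epsilon\rangle$ and $\langle\frac{\Theta^\epsilon+\theta^0}{N^\epsilon+\rho^0}\pa\nabla N^\epsilon,\pa\U^\epsilon\rangle$ — are instead integrated by parts and absorbed, via small parameters $\eta_i$, into the parabolic dissipation $\mu\int\frac{|\nabla\pa\U^\epsilon|^2}{N^\epsilon+\rho^0}$ and $\kappa\int\frac{|\nabla\pa\Theta^\epsilon|^2}{N^\epsilon+\rho^0}$; this is exactly why Remark 1.4 says the viscosity and heat conduction are crucial here and that the symmetrizer $\A^\epsilon$ is only needed in the inviscid case. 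Your approach buys robustness (it survives $\mu=\lambda=\kappa=0$) at the cost of handling $\partial_t\tilde{\A}_0^\epsilon$ and the extra commutators $[\pa,\A^\epsilon\A_i^\epsilon\partial_{x_i}]$, and it yields the estimate with a harmless multiplicative constant in front of $\v\mathcal{E}^\epsilon(t=0)\v_s^2$ coming from the equivalence $E_s^\epsilon\sim\v\mathcal{E}^\epsilon\v_s^2$. The relaxation mechanism for $\F,\G$ and the source of the $C\epsilon^2$ remainder are identified exactly as in the paper.

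One correction to your anticipated ``main obstacle'': the cancellation you hope for between the $\F$-polynomial bracket in \eqref{raderror22} and the right-hand side of \eqref{raderror3} does not exist and is not needed. Those two contributions are tested against \emph{different} functions ($\pa\Theta^\epsilon$ with weight $\frac{1}{N^\epsilon+\rho^0}$ versus $\pa\F$ with weight $1$), so they cannot pair off. Nor is there any risk of derivative loss: the terms $(\Theta^\epsilon)^4, (\Theta^\epsilon)^3\theta^0,\dots$ are zeroth-order nonlinearities, and since $H^s$ is an algebra for $s>n/2$, $\|\pa((\Theta^\epsilon)^k(\theta^0)^{4-k})\|\le C\|\Theta^\epsilon\|_s^{k}$ directly. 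The paper simply bounds each such term by Cauchy--Schwarz, putting a small multiple $\sigma\|\pa\F\|^2$ (absorbed into the relaxation dissipation, which is why only $\tfrac14\|\F\|_s^2$ survives on the left of \eqref{H2}) plus $C\|\mathcal{E}^\epsilon\|_s^{8}$-type powers, all dominated by the $(\|\mathcal{E}^\epsilon\|_s^{2(s+1)}+1)\|\mathcal{E}^\epsilon\|_s^2$ integrand. If you replace your cancellation step with this direct absorption, your argument closes.
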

\begin{proof}
   Let $0\leq|\alpha|\leq s$.  In the following arguments the commutators will disappear in the case of $|\alpha|=0$.

 Applying the operator  $\partial^\alpha_x$ to \eqref{raderror1},
   multiplying  the resulting equation
    by $ \partial^\alpha_x N^\epsilon,$ and integrating over $\Omega $, we obtain that
    \begin{align}\label{hn1}
  \frac12\frac{\rm d}{{\rm d}t}\left\langle  \pa N^\epsilon, \pa N^\epsilon \right\rangle
 = & -\left\langle \pa([(\U^\epsilon+\u^0)\cdot\nabla] N^\epsilon),\pa N^\epsilon\right\rangle\nonumber\\
   &-\left\langle \pa((N^\epsilon+\rho^0)\dv \U^\epsilon), \pa N^\epsilon\right\rangle\nonumber\\
  & +\left\langle \pa((-N^\epsilon \dv \u^0-\nabla \rho^0\cdot \U^\epsilon),\pa N^\epsilon\right\rangle.
    \end{align}

The terms on the right-hand side of \eqref{hn1} can be bounded in a similar way to those in \cite{JL2}.
Here we present them for completeness. By the regularity of $\u^0$,
Cauchy-Schwarz's inequality, and Sobolev's imbedding theorem, we have
\begin{align}\label{hn2}
 & \ \ \ \ \langle \partial^\alpha_x([(\U^\epsilon+\u^0)\cdot \nabla]N^\epsilon),\partial^\alpha_x N^\epsilon\rangle\nonumber\\
& = \langle [(\U^\epsilon+\u^0)\cdot \nabla]\partial^\alpha_x N^\epsilon , \partial^\alpha_x N^\epsilon\rangle\
  +\big\langle \mathcal{H}^{(1)},\partial^\alpha_x N^\epsilon \big\rangle\nonumber\\
  & = -\frac12 \langle \dv(\U^\epsilon+\u^0) \partial^\alpha_x N^\epsilon , \partial^\alpha_x N^\epsilon \rangle\
   +\big\langle \mathcal{H}^{(1)},\partial^\alpha_x N^\epsilon \big \rangle\nonumber\\
 & \leq C(\| \mathcal{E}^\epsilon(t)\| _{s}+1)\| \partial^\alpha_x N^\epsilon\| ^2   +\|  \mathcal{H}^{(1)}\| ^2,
\end{align}
where the commutator
\begin{align*}
 \mathcal{H}^{(1)}  =\partial^\alpha_x([(\U^\epsilon+\u^0)\cdot \nabla]N^\epsilon)-[(\U^\epsilon+\u^0)\cdot \nabla]\partial^\alpha_x N^\epsilon
\end{align*}
can be estimated as follows. Using the Moser-type and Cauchy-Schwarz's inequalities,
the regularity of $\u^0$ and Sobolev's imbedding inequalities, we obtain that
\begin{align}\label{ca}
  \big\|\mathcal{H}^{(1)} \big\| &\leq C( \| D_x^1(\U^\epsilon+\u^0)\| _{L^\infty}\| D_x^sN^\epsilon\|
+\| D_x^1N^\epsilon\| _{L^\infty}\| D^{{s}}_x(\U^\epsilon+\u^0)\|) \nonumber\\
   & \leq C\| \mathcal{E}^\epsilon(t)\| _{s}^2+ C\| \mathcal{E}^\epsilon(t)\| _{s}.
\end{align}

Similarly, the second term on the right-hand side of \eqref{hn1} can bounded as follows.
\begin{align}\label{hn3}
  & \left\langle \pa((N^\epsilon+\rho^0)\dv \U^\epsilon), \pa N^\epsilon\right\rangle\nonumber\\
 & = \langle (N^\epsilon+\rho^0) \partial^\alpha_x\dv \U^\epsilon , \partial^\alpha_x N^\epsilon\rangle
  +\big\langle \mathcal{H}^{(2)},\partial^\alpha_x N^\epsilon \big\rangle\nonumber\\
 & \leq \eta_1 \|\nabla \partial^\alpha_x \U^\epsilon\|^2+ C_{\eta_1}(\| \mathcal{E}^\epsilon(t)\|^2 _{s}+1) \| \partial^\alpha_x N^\epsilon\| ^2
 +\big\|\mathcal{H}^{(2)}\big\| ^2
\end{align}
for any $\eta_1>0$, where the commutator
\begin{align*}
 \mathcal{H}^{(2)}  =\pa((N^\epsilon+\rho^0)\dv \U^\epsilon) - (N^\epsilon+\rho^0) \partial^\alpha_x\dv \U^\epsilon
\end{align*}
can be estimated by
\begin{align}\label{cb}
\big\|\mathcal{H}^{(2)} \big\| &\leq C( \| D_x^1(N^\epsilon+\rho^0)\| _{L^\infty}\| D_x^s\U^\epsilon\|
+\| D_x^1\U^\epsilon\| _{L^\infty}\| D^{s}_x(N^\epsilon+\rho^0)\| )\nonumber\\
   & \leq C\| \mathcal{E}^\epsilon(t)\| _{s}^2+ C\| \mathcal{E}^\epsilon(t)\| _{s}.
\end{align}

By the Moser-type and Cauchy-Schwarz's inequalities, and the regularity of $\u^0$ and $\rho^0$, we
can control the third term on the right-hand side of \eqref{hn1} by
\begin{align}\label{hn4}
\left|\left\langle \pa((-N^\epsilon \dv \u^0-\nabla \rho^0\cdot \U^\epsilon),\pa N^\epsilon\right\rangle\right|
\leq C\| \mathcal{E}^\epsilon(t)\| _{s}^2. %C( \|\pa  N^\epsilon\|^2+\|\pa \U^\epsilon\|^2)\textcolor{red}{should\ be\???}.
\end{align}
 Substituting \eqref{hn2}--\eqref{hn4} into \eqref{hn1}, we conclude that
 \begin{align}\label{HN1}
  \frac12\frac{\rm d}{{\rm d}t}\left\langle  \pa N^\epsilon, \pa N^\epsilon \right\rangle
 \leq &  \eta_1 \|\nabla \partial^\alpha_x \U^\epsilon\|^2 \nonumber\\
 & + C\big[(\| \mathcal{E}^\epsilon(t)\|^2_{s}+1)\| \partial^\alpha_x N^\epsilon\| ^2+  (\| \mathcal{E}^\epsilon(t)\| _{s}^2+\| \mathcal{E}^\epsilon(t)\| _{s} )^2\big].
  \end{align}

   Applying the operator $\partial^\alpha_x$ to \eqref{raderror2}, multiplying  the resulting equations
    by $ \partial^\alpha_x\U^\epsilon$, and integrating over $\Omega $, we obtain that
    \begin{align}\label{HU2}
  & \frac12 \frac{\dif}{\dif  t}\langle \partial^\alpha_x\U^\epsilon, \partial^\alpha_x\U^\epsilon \rangle
  + \langle \partial^\alpha_x([(\U^\epsilon+\u^0)\cdot \nabla]\U^\epsilon),\partial^\alpha_x\U^\epsilon\rangle\nonumber\\
   &   +\left\langle \partial^\alpha_x\nabla \Theta^\epsilon, \partial^\alpha_x\U^\epsilon\right\rangle
  +  \left\langle \partial^\alpha_x\left( \frac{{\Theta^{\epsilon}+\theta^0}}{N^\epsilon+\rho^0}\nabla N^\epsilon\right), \partial^\alpha_x\U^\epsilon\right\rangle\nonumber\\
 & -  \left\langle   \partial^\alpha_x\left( \frac{1}{N^\epsilon+\rho^0} \dv\Psi(\U^\epsilon)\right) , \partial^\alpha_x \U^\epsilon\right\rangle\nonumber\\
   =     &    -  \left \langle\partial^\alpha_x\left[(\U^\epsilon \cdot \nabla )\u^0\right],   \partial^\alpha_x \U^\epsilon\right\rangle\nonumber\\
   &    - \left \langle\partial^\alpha_x\left\{\left[\frac{ {\Theta^{\epsilon}+\theta^0}}{N^\epsilon+\rho^0}-\frac{ {\theta^0}}{\rho^0}\right] \nabla \rho^0\right\},
   \partial^\alpha_x \U^\epsilon\right\rangle\nonumber\\
   &  +   \left \langle\partial^\alpha_x\left\{\left[\frac{1}{N^\epsilon+\rho^0}-\frac{1}{\rho^0}\right]\dv\Psi(\u^0)\right\},
   \partial^\alpha_x \U^\epsilon\right\rangle\nonumber\\
%  & + \left\langle\partial^\alpha_x\left\{ \frac{1}{N^\epsilon+\rho^0}[\F^\epsilon+\u^0\times \G +\U^\epsilon\times \mathbf{q}^0]\times \mathbf{q}^0\right\},
%   \partial^\alpha_x\U^\epsilon \right\rangle\nonumber\\
%   & + \left\langle \partial^\alpha_x\left\{ \frac{1}{N^\epsilon+\rho^0} [\F^\epsilon+\u^0\times \G +\U^\epsilon\times\mathbf{q}^0]\times \G \right\}
%   , \partial^\alpha_x\U^\epsilon \right\rangle\nonumber\\
    & + \left\langle \partial^\alpha_x\left\{\frac{\epsilon}{N^\epsilon+\rho^0} (\G +\mathbf{q}^0)\right\} , \partial^\alpha_x\U^\epsilon \right\rangle \nonumber\\
 := &  \sum^{4}_{i=1}\mathcal{R}^{(i)}.
  \end{align}

The last four terms on the left-hand side of \eqref{HU2} can be estimated as follows. Similar to \eqref{hn2}, we infer that
\begin{align}\label{hu1}
  & \ \ \ \ \langle \partial^\alpha_x([(\U^\epsilon+\u^0)\cdot \nabla]\U^\epsilon),\partial^\alpha_x\U^\epsilon\rangle \nonumber\\
  & = \langle [(\U^\epsilon+\u^0)\cdot \nabla]\partial^\alpha_x \U^\epsilon , \partial^\alpha_x \U^\epsilon\rangle\
  +\big\langle \mathcal{H}^{(3)},\partial^\alpha_x \U^\epsilon \big\rangle\nonumber\\
  & = -\frac12 \langle \dv(\U^\epsilon+\u^0) \partial^\alpha_x \U^\epsilon , \partial^\alpha_x \U^\epsilon \rangle\
   +\big\langle \mathcal{H}^{(3)},\partial^\alpha_x \U^\epsilon \big \rangle\nonumber\\
 & \leq C(\| \mathcal{E}^\epsilon(t)\| _{s}+1)\| \partial^\alpha_x \U^\epsilon\| ^2   +\big\|  \mathcal{H}^{(3)}\big\| ^2,
\end{align}
where the commutator
\begin{align*}
 \mathcal{H}^{(3)}  =\partial^\alpha_x([(\U^\epsilon+\u^0)\cdot \nabla]\U^\epsilon)-[(\U^\epsilon+\u^0)\cdot \nabla]\partial^\alpha_x \U^\epsilon
\end{align*}
can be bounded by
\begin{align}\label{hu2}
\big\|\mathcal{H}^{(3)} \big\| &\leq C( \| D_x^1(\U^\epsilon+\u^0)\| _{L^\infty}\| D_x^s\U^\epsilon\|
+\| D_x^1\U^\epsilon\| _{L^\infty}\| D^{s}_x(\U^\epsilon+\u^0)\|) \nonumber\\
   & \leq C\| \mathcal{E}^\epsilon(t)\| _{s}^2+ {C\| \mathcal{E}^\epsilon(t)\| _{s}}.
\end{align}

By H\"older's inequality, we have
\begin{align}\label{hu3}
 \left\langle \partial^\alpha_x\nabla \Theta^\epsilon, \partial^\alpha_x\U^\epsilon\right\rangle
\leq \eta_2    \|\partial^\alpha_x\nabla \Theta^\epsilon\|^2+C_{\eta_2}\|\partial^\alpha_x\U^\epsilon\|^2
\end{align}
for any $\eta_2>0$.
For the fourth term on the left-hand side of \eqref{HU2}, similar to \eqref{hn3}, we integrate it by parts to deduce that
\begin{align}\label{hu4}
%&\textcolor{red}{according\ to\ above\ modification\ ,\ rewrtie\ this\ estimate???}\\
 & \quad\,  \left\langle \partial^\alpha_x\left( \frac{{\Theta^{\epsilon}+\theta^0}}{N^\epsilon+\rho^0}\nabla N^\epsilon\right), \partial^\alpha_x\U^\epsilon\right\rangle \nonumber\\
& = \left\langle  \frac{\Theta^{\epsilon}+\theta^0}{N^\epsilon+\rho^0}\partial^\alpha_x \nabla N^\epsilon , \partial^\alpha_x\U^\epsilon\right\rangle
 + \big\langle \mathcal{H}^{(4)}, \partial^\alpha_x\U^\epsilon\big\rangle\nonumber\\
&   =  -\left\langle\partial^\alpha_x   N^\epsilon , \dv\left(\frac{\Theta^{\epsilon}+\theta^0}{N^\epsilon+\rho^0}\partial^\alpha_x\U^\epsilon\right)  \right\rangle\
  +\big\langle \mathcal{H}^{(4)},\partial^\alpha_x \U^\epsilon \big\rangle\nonumber\\
&   \leq   \eta_3 \|\nabla \partial^\alpha_x \U^\epsilon\|^2+ C_{\eta_3} \| \partial^\alpha_x N^\epsilon\| ^2+C(\| \mathcal{E}^\epsilon(t)\| _{s}+1)\|\partial^\alpha_x\U^\epsilon\|^2
 +\big\|  \mathcal{H}^{(4)}\big\| ^2
\end{align}
for any $\eta_3>0$,
where the commutator
\begin{align*}
 \mathcal{H}^{(4)}  =
  \partial^\alpha_x\left( \frac{\Theta^{\epsilon}+\theta^0}{N^\epsilon+\rho^0}\nabla N^\epsilon\right)
  -   \frac{\Theta^{\epsilon}+\theta^0}{N^\epsilon+\rho^0}\partial^\alpha_x \nabla N^\epsilon
\end{align*}
can be bounded as follows, using \eqref{mb}, \eqref{mo} and Cauchy-Schwarz's inequality:
 \begin{align}\label{hu44}
\big\|\mathcal{H}^{(4)}\big\| &\leq C\left( \left\| D_x^1\left( \frac{\Theta^{\epsilon}+\theta^0}{N^\epsilon+\rho^0}\right)\right\|_{L^\infty}\| D_x^sN^\epsilon\|
+\| D_x^1N^\epsilon\| _{L^\infty}\left\| D^{s}_x\left( \frac{\Theta^{\epsilon}+\theta^0}{N^\epsilon+\rho^0}\right)\right\|\right) \nonumber\\
   & \leq C(\| \mathcal{E}^\epsilon(t)\| _{s}^2+ \| \mathcal{E}^\epsilon(t)\| _{s}^{2(s+1)} +\| \mathcal{E}^\epsilon(t)\| _{s}).
\end{align}
For the fifth term on the left-hand side of \eqref{HU2}, we have
\begin{align} \label{hu5}
   &-  \left\langle   \partial^\alpha_x\left( \frac{1}{N^\epsilon+\rho^0} \dv\Psi(\U^\epsilon)\right) , \partial^\alpha_x \U^\epsilon\right\rangle\nonumber\\
  =& -  \left\langle  \frac{1}{N^\epsilon+\rho^0}  \partial^\alpha_x \dv\Psi(\U^\epsilon)  , \partial^\alpha_x\U^\epsilon\right\rangle
   -  \big\langle\mathcal{H}^{(5)}, \partial^\alpha_x\U^\epsilon\big\rangle,
  \end{align}
where the commutator
\begin{align*}
   \mathcal{H}^{(5)}=  \partial^\alpha_x\left( \frac{1}{N^\epsilon+\rho^0} \dv\Psi(\U^\epsilon)\right)
  - \frac{1}{N^\epsilon+\rho^0}  \partial^\alpha_x \dv\Psi(\U^\epsilon).
\end{align*}
By the Moser-type and Cauchy-Schwarz inequalities, the regularity of $\rho^0$ and the
positivity of $N^\epsilon+\rho_0$,  the definition of  $\Psi(\U^\epsilon)$ and Sobolev's imbedding theorem, we find that
 \begin{align}\label{hu6}
 & \ \ \   \big|\big\langle\mathcal{H}^{(5)},
 \partial^\alpha_x\U^\epsilon\big\rangle\big|
    \leq  \big\|\mathcal{H}^{(5)}\big\|\cdot \| \partial^\alpha_x\U^\epsilon \|  \nonumber\\
   & \leq C \left(\left\Vert D_x^1\left(\frac{1}{N^\epsilon+\rho^0}\right)\right\Vert_{L^\infty}\|  \dv\Psi(\U^\epsilon)\| _{s-1}
   +\| \dv\Psi(\U^\epsilon)\| _{L^\infty}\left\Vert\frac{1}{N^\epsilon+\rho^0}\right\Vert_{s}\right)\| \partial^\alpha_x\U^\epsilon \| \nonumber\\
   &\leq \eta_4 \| \nabla \U^\epsilon\| ^2_{s}+C_{\eta_4} (\| \mathcal{E}^\epsilon(t)\| ^2_{s}+1)\| \partial^\alpha_x
   \U^\epsilon\| ^2+C\| \mathcal{E}^\epsilon(t)\|^2_{s}(1+\| \mathcal{E}^\epsilon(t)\|_{s}^{2s})
 \end{align}
 for any $\eta_4>0$, where we have used the assumption $s>n/2+2$
and the imbedding $H^l(\Omega )\hookrightarrow L^\infty (\mathbb{R}^n)$ for $l>n/2$.
By virtue of the definition of $\Psi(\U^\epsilon)$ and integration by parts, the first term on the right-hand side of \eqref{hu5}
can be rewritten as
    \begin{align}\label{hu7}
 &  -  \left\langle  \frac{1}{N^\epsilon+\rho^0}  \partial^\alpha_x \dv\Psi(\U^\epsilon)  , \partial^\alpha_x\U^\epsilon\right\rangle\nonumber\\
 =&2\mu \left\langle \frac{1}{N^\epsilon+\rho^0}\partial^\alpha_x\mathbb{D}(\U^\epsilon),\partial^\alpha_x\mathbb{D}(\U^\epsilon)\right\rangle
+\lambda \left\langle \frac{1}{N^\epsilon+\rho^0}\partial^\alpha_x\dv \U^\epsilon ,\partial^\alpha_x \dv \U^\epsilon \right\rangle
\nonumber\\
& +2\mu \left\langle \nabla\left(\frac{1}{N^\epsilon+\rho^0}\right)\otimes \partial^\alpha_x\U^\epsilon,\partial^\alpha_x\mathbb{D}(\U^\epsilon)\right\rangle\nonumber\\
&+\lambda \left\langle \nabla\left(\frac{1}{N^\epsilon+\rho^0}\right)\cdot\partial^\alpha_x \U^\epsilon ,\partial^\alpha_x \dv \U^\epsilon \right\rangle
\nonumber\\
:=& \sum^4_{i=1}\mathcal{I}^{(i)}.
\end{align}
Recalling that $\mu>0$ and $2\mu+n\lambda>0$, and the positivity of $N^\epsilon+\rho_0$,
the first two terms $\mathcal{I}^{(1)}$ and $\mathcal{I}^{(2)}$ can be bounded as follows.
\begin{align}\label{hu8}
 \mathcal{I}^{(1)}+\mathcal{I}^{(2)} &=\int  \frac{1}{N^\epsilon+\rho^0}\big\{2\mu|\pa\mathbb{D}
 (\U^\epsilon)|^2+\lambda|\pa\mbox{tr}\mathbb{D}(\U^\epsilon)|^2\big\}{\rm d}x\nonumber\\
% & \geq  C\int  \big\{2\mu|\pa\mathbb{D}(\U^\epsilon)|^2+\lambda|\pa\mbox{tr}\mathbb{D}(\U^\epsilon)|^2\big\}{\rm d}x\nonumber\\
&  \geq 2 \mu \int \frac{1}{N^\epsilon+\rho^0}\left(|\pa\mathbb{D}(\U^\epsilon)|^2-\frac1n |\pa\mbox{tr}\mathbb{D}(\U^\epsilon)|^2\right){\rm d}x\nonumber\\
  & =   \mu \int \frac{1}{N^\epsilon+\rho^0}\left(|\pa\nabla\U^\epsilon|^2+\frac1n |\pa\dv  \U^\epsilon|^2\right){\rm d}x\nonumber\\
  & \geq   \mu \int \frac{1}{N^\epsilon+\rho^0}|\pa\nabla\U^\epsilon|^2{\rm d}x.
\end{align}

  By virtue of Cauchy-Schwarz's inequality, the regularity of $\rho^0$ and the
positivity of $N^\epsilon+\rho_0$, the terms $\mathcal{I}^{(3)}$ and $\mathcal{I}^{(4)}$ can be controlled by
 \begin{align}\label{hu9}
|\mathcal{I}^{(3)}|+|\mathcal{I}^{(4)}|
    &\leq \eta_5 \|\nabla \pa\U^\epsilon\|^2 +C_{\eta_5} (\|\mathcal{E}^\epsilon(t)\|^2_{s}+1)(\|\partial^\alpha_x
   \U^\epsilon\|^2 +\|\partial^\alpha_x N^\epsilon\|^2)
 \end{align}
 for any $\eta_5>0$, where the assumption $s>n/2+2$ has been used.

Substituting \eqref{hu1}--\eqref{hu9} into \eqref{HU2}, we conclude that
\begin{align}\label{HU20}
&   \frac12\frac{\dif}{\dif t}\langle \partial^\alpha_x\U^\epsilon,
\partial^\alpha_x\U^\epsilon \rangle
  +  \int \frac{\mu}{N^\epsilon+\rho^0} |\nabla \partial^\alpha_x \U^\epsilon|^2 \dif x-
  (\eta_3+\eta_4+\eta_5)\|\nabla \U^\epsilon\|_s^2\nonumber\\
  \leq &
   C_{{\eta}}\big\{ (\| \mathcal{E}^\epsilon(t)\| ^2_{s}+1)(\| \partial^\alpha_x
   \U^\epsilon\| ^2+\| \partial^\alpha_x  N^\epsilon\| ^2+\| \mathcal{E}^\epsilon(t)\|_{s}^{2s})+ \| \mathcal{E}^\epsilon(t)\|_{s}^{2} \big\}\nonumber\\
   & + \eta_2    \|\partial^\alpha_x\nabla \Theta^\epsilon\|^2 +\sum^{4}_{i=1}\mathcal{R}^{(i)}
\end{align}
for some constant $C_{{\eta}}>0$ depending on $\eta_i$ ($i=1,\dots,5$).

 We have to estimate the terms on the right-hand side of \eqref{HU20}.
  In view of the regularity of $(\rho^0,\u^0,\mathbf{q}^0)$,
the positivity of $ N^\epsilon+\rho^0$ and Cauchy-Schwarz's inequality, the first
two terms $\mathcal{R}^{(1)}$ and $\mathcal{R}^{(2)}$ can be controlled by
\begin{align}\label{hu10}
  \big|\mathcal{R}^{(1)}  \big|+  \big|\mathcal{R}^{(2)}  \big| \leq  C(\|\mathcal{E}^\epsilon(t)\|_{s}^2 +\|\mathcal{E}^\epsilon(t)\|_{s}^{2s})
  +C\|\partial^\alpha_x\U^\epsilon\|^2.
\end{align}

For the term $\mathcal{R}^{(3)}$,  by the regularity of $ \rho^0$ and $\u^0$,
the positivity of $ N^\epsilon+\rho^0$,  Cauchy-Schwarz's inequality and \eqref{ho}, we see that
\begin{align}\label{hu11}
     \big|\mathcal{R}^{(3)}  \big| \leq  C\|\mathcal{E}^\epsilon(t)\|_{s}^2+
   C\|\partial^\alpha_x\U^\epsilon\|^2.
\end{align}

For the last term $\mathcal{R}^{(4)}$,   we utilize the positivity of $N^\epsilon+\rho^0$ to deduce that
\begin{align}\label{hu12}
    \big| \mathcal{R}^{(4)}  \big|& =  \left|\left\langle\frac{\epsilon}{N^\epsilon+\rho^0}\partial^\alpha_x \G,
   \partial^\alpha_x\U^\epsilon \right\rangle\right|
+    \big|\big\langle \mathcal{H}^{(6)},
\partial^\alpha_x\U^\epsilon \big\rangle  \big| +   \big|\mathcal{R}^{(4_1)}  \big|\nonumber\\
 & \leq \frac{1}{16} \| \partial^\alpha_x \G\| ^2+
 C\| \partial^\alpha_x\U^\epsilon\| ^2+    \big|\big\langle \mathcal{H}^{(6)}, \partial^\alpha_x\U^\epsilon\big\rangle  \big| +    \big|\mathcal{R}^{(4_1)}  \big|,
\end{align}
where
\begin{align*}
 \mathcal{H}^{(6)}  =\epsilon\partial^\alpha_x\left\{
\frac{\G}{N^\epsilon+\rho^0} \right\}-\frac{\epsilon }{N^\epsilon+\rho^0}\partial^\alpha_x \G
\nonumber\
\end{align*}
and
\begin{align*}
\mathcal{R}^{(4_1)}=\epsilon\left\langle\partial^\alpha_x\left\{
\frac{\mathbf{q}^0}{N^\epsilon+\rho^0}\right\},
   \partial^\alpha_x\U^\epsilon \right\rangle.
\end{align*}

If we make use of the Moser-type inequality, \eqref{mo} and the regularity of $\rho^0$ and $\mathbf{q}^0$, we obtain that
 \begin{align}\label{hu13}
 &  \ \ \   \big|\big\langle\mathcal{H}^{(6)},
  \partial^\alpha_x\U^\epsilon\big\rangle\big| \leq  \big\|\mathcal{H}^{(6)}\big\|\cdot \| \partial^\alpha_x\U^\epsilon \|  \nonumber\\
   & \leq C \left[\left\Vert D_x^1\left(\frac{1}{N^\epsilon+\rho^0}\right)\right\Vert_{L^\infty}\| \G\| _{s-1}
   +\| \G\| _{L^\infty}\left\Vert\frac{1}{N^\epsilon+\rho^0}\right\Vert_{s}\right]\| \partial^\alpha_x\U^\epsilon \| \nonumber\\
   &\leq \eta_6 \| \G\| ^2_{s-1}+C_{\eta_6} (\| \mathcal{E}^\epsilon(t)\| ^{2(s+1)}_{s}+1) \| \partial^\alpha_x
   \U^\epsilon\| ^2
 \end{align}
 for any $\eta_6>0$.
 Recalling the regularity of $\u^0$ and $\mathbf{q}^0$, \eqref{ma} and \eqref{mo} and H\"{o}lder's
inequality, we find that
\begin{align}\label{hu14}
\big|\mathcal{R}^{(4_1)}\big|\leq  C(\| \mathcal{E}^\epsilon(t)\| _{s}^{2s}
+1)\| \partial^\alpha_x\U^\epsilon\| ^2 +C\epsilon^2.
\end{align}

Substituting  \eqref{hu10}--\eqref{hu14} into \eqref{HU20}, we conclude that
 \begin{align}\label{HU202}
&   \frac12\frac{\dif}{\dif t}\langle \partial^\alpha_x\U^\epsilon,
\partial^\alpha_x\U^\epsilon \rangle
  +  \int \frac{\mu}{N^\epsilon+\rho^0} |\nabla \partial^\alpha_x \U^\epsilon|^2 \dif x-
  (\eta_3+\eta_4+\eta_5)\|\nabla \U^\epsilon\|_s^2\nonumber\\
\leq   &
    \tilde{C}_{{\eta}}\big[(\| \mathcal{E}^\epsilon(t)\| _{s}^{2(s+1)}+1) \| \mathcal{E}^\epsilon(t)\| ^2_{s} \big]\nonumber\\
  & +\eta_2    \|\partial^\alpha_x\nabla \Theta^\epsilon\|^2+\left(\eta_6+\frac{1}{16}\right)\| \G\| ^2_{s}+C\epsilon^2
\end{align}
for some constant $\tilde{C}_{{\eta}}>0$ depending on $\eta_i$ ($i=2,\dots,6$).

%\newpage
  Applying the operator $\partial^\alpha_x$ to \eqref{raderror22}, multiplying  the resulting equation
    by $ \partial^\alpha_x\Theta^\epsilon$, and integrating over $\Omega $, we arrive at
\begin{align}  \label{HT1}
 & \frac12\frac{\dif}{\dif t}\langle\pa\Theta^\epsilon,\pa\Theta^\epsilon\rangle  +\langle \pa\{[(\U^\epsilon+\u^0)\cdot \nabla]\Theta^\epsilon\},\pa\Theta^\epsilon\rangle    \nonumber\\
& + \left\langle\pa\{(\Theta^\epsilon+\theta^0)\, \dv \U^\epsilon\},\pa\Theta^\epsilon\right\rangle-\left\langle\pa\left\{\frac{\kappa}{N^\epsilon+\rho^0}\Delta \Theta^\epsilon\right\},\pa\Theta^\epsilon\right\rangle\nonumber\\
 =  &-\langle \pa\{(\U^\epsilon \cdot \nabla )\theta^0
  -\Theta^\epsilon \dv \u^0\},\pa\Theta^\epsilon\rangle\nonumber\\
  & +\left\langle\pa\left\{\left[\frac{\kappa}{N^\epsilon+\rho^0}-\frac{\kappa}{\rho^0}\right]\Delta \theta^0 \right\},\pa\Theta^\epsilon\right\rangle   \nonumber\\
  & +\left\langle\pa\left\{\left[\frac{2\mu}{N^\epsilon+\rho^0}-\frac{2\mu}{\rho^0}\right]|\mathbb{D}(\u^0)|^2 \right\},\pa\Theta^\epsilon\right\rangle   \nonumber\\
  & +\left\langle\pa\left\{\left[\frac{\lambda}{N^\epsilon+\rho^0}-\frac{\lambda}{\rho^0}\right](\mbox{tr}\mathbb{D}(\u^0))^2 \right\},\pa\Theta^\epsilon\right\rangle   \nonumber\\
  & +\left\langle\pa\left\{\left[\frac{1}{N^\epsilon+\rho^0}-\frac{1}{\rho^0}\right]\dv \mathbf{q}^0 \right\},\pa\Theta^\epsilon\right\rangle   \nonumber\\
  &  + \left\langle\pa\left\{\frac{ 2\mu}{N^\epsilon+\rho^0} |\mathbb{D}(\U^\epsilon)|^2+\frac{\lambda}{N^\epsilon+\rho^0}|\mbox{tr}\mathbb{D}(\U^\epsilon)|^2\right\},\pa\Theta^\epsilon\right\rangle\nonumber\\
  &   + \left\langle\pa\left\{\frac{ 4\mu}{N^\epsilon+\rho^0}\mathbb{D}(\U^\epsilon): \mathbb{D}(\u^0)\right\},\pa\Theta^\epsilon\right\rangle\nonumber\\
   &  + \left\langle\pa\left\{\frac{ 2\lambda}{N^\epsilon+\rho^0}\,[\mbox{tr}\mathbb{D}(\U^\epsilon) \mbox{tr}\mathbb{D}(\u^0)]\right\},\pa\Theta^\epsilon\right\rangle\nonumber\\
%    &  +\left\langle\pa\left\{ \frac{1}{N^\epsilon+\rho^0}|\F^\epsilon+\U^\epsilon\times \G |^2\right\},\pa\Theta^\epsilon\right\rangle\nonumber\\
    & +\left\langle\pa\left\{ \frac{\F}{N^\epsilon+\rho^0}  \right\},\pa\Theta^\epsilon\right\rangle
    - \left\langle\pa\left\{ \frac{(\Theta^\epsilon)^4}{N^\epsilon+\rho^0}  \right\},\pa\Theta^\epsilon\right\rangle\nonumber\\
    & \nonumber\\
     & -\left\langle\pa\left\{ \frac{4(\Theta^\epsilon)^3\theta^0}{N^\epsilon+\rho^0}  \right\},\pa\Theta^\epsilon\right\rangle
    - \left\langle\pa\left\{ \frac{6(\Theta^\epsilon)^2(\theta^0)^2}{N^\epsilon+\rho^0}  \right\},\pa\Theta^\epsilon\right\rangle\nonumber\\
    & \nonumber\\
     & -\left\langle\pa\left\{ \frac{4\Theta^\epsilon(\theta^0)^3}{N^\epsilon+\rho^0}  \right\},\pa\Theta^\epsilon\right\rangle\nonumber\\
  %    &  + \left\langle\pa\left\{\frac{2\F^\epsilon}{N^\epsilon+\rho^0}\cdot
%                   [\cu \mathbf{q}^0+\u^0\times \G +\U^\epsilon\times \mathbf{q}^0]\right\},\pa\Theta^\epsilon\right\rangle
%    - \left\langle\pa\left\{\frac{2\F^\epsilon}{N^\epsilon+\rho^0}\cdot
%                   [\cu \mathbf{q}^0+\u^0\times \G +\U^\epsilon\times \mathbf{q}^0]\right\},\pa\Theta^\epsilon\right\rangle\nonumber\\
%    &  + \left\langle\pa\left\{\frac{2(\U^\epsilon\times \G )}{N^\epsilon+\rho^0}\cdot
%                   [\cu \mathbf{q}^0+\u^0\times \G +\U^\epsilon\times \mathbf{q}^0]\right\},\pa\Theta^\epsilon\right\rangle\nonumber\\
%      &  +   \left\langle\pa\left\{\frac{2}{N^\epsilon+\rho^0}\cu \mathbf{q}^0\cdot (\u^0\times \G +\U^\epsilon\times \mathbf{q}^0)\right\},\pa\Theta^\epsilon\right\rangle\nonumber\\
      := & \sum^{13}_{i=1}\mathcal{S}^{(i)}.
\end{align}

We first bound the terms on the left-hand side of \eqref{HT1}. Similar to \eqref{hn2}, we  have
\begin{align}\label{ht1}
 &  \ \ \ \ \langle \partial^\alpha_x([(\U^\epsilon+\u^0)\cdot \nabla]\Theta^\epsilon),\partial^\alpha_x\Theta^\epsilon\rangle\nonumber\\
  & = \langle [(\U^\epsilon+\u^0)\cdot \nabla]\partial^\alpha_x \Theta^\epsilon , \partial^\alpha_x \Theta^\epsilon\rangle\
  +\big\langle \mathcal{H}^{(7)},\partial^\alpha_x \Theta^\epsilon \big\rangle\nonumber\\
  & = -\frac12 \langle \dv(\U^\epsilon+\u^0) \partial^\alpha_x \Theta^\epsilon , \partial^\alpha_x \Theta^\epsilon \rangle\
   +\big\langle \mathcal{H}^{(7)},\partial^\alpha_x \Theta^\epsilon \big \rangle\nonumber\\
 & \leq C(\| \mathcal{E}^\epsilon(t)\| _{s}+1)\| \partial^\alpha_x \Theta^\epsilon\| ^2   +\big\|  \mathcal{H}^{(7)}\big\| ^2,
\end{align}
where the commutator
\begin{align*}
 \mathcal{H}^{(7)}  =\partial^\alpha_x([(\U^\epsilon+\u^0)\cdot \nabla]\Theta^\epsilon)-[(\U^\epsilon+\u^0)\cdot \nabla]\partial^\alpha_x \Theta^\epsilon
\end{align*}
can be bounded by
\begin{align}\label{ht2}
  \big\|\mathcal{H}^{(7)} \big\| &\leq C( \| D_x^1(\U^\epsilon+\u^0)\| _{L^\infty}\| D_x^s\Theta^\epsilon\|
+\| D_x^1\Theta^\epsilon\| _{L^\infty}\| D^{s}_x(\U^\epsilon+\u^0)\|) \nonumber\\
   & \leq C\| \mathcal{E}^\epsilon(t)\| _{s}^2+C\| \mathcal{E}^\epsilon(t)\| _{s}.
\end{align}
The second term on the left-hand side of \eqref{HT1} can bounded, similarly to \eqref{hn3}, as follows.
\begin{align}\label{ht3}
  &   \big|\left\langle \pa((\Theta^\epsilon+\theta^0)\dv \U^\epsilon), \pa \Theta^\epsilon\right\rangle  \big|\nonumber\\
 \leq &  \big|\langle [(\Theta^\epsilon+\rho^0) \partial^\alpha_x\dv \U^\epsilon , \partial^\alpha_x \Theta^\epsilon\rangle  \big|
  +  \big|\big\langle \mathcal{H}^{(8)},\partial^\alpha_x \Theta^\epsilon \big\rangle  \big|\nonumber\\
  \leq  & \eta_7 \|\nabla \partial^\alpha_x \U^\epsilon\|^2+ C_{\eta_7} \| \partial^\alpha_x \Theta^\epsilon\| ^2   +\big\|  \mathcal{H}^{(8)}\big\| ^2
  \qquad \mbox{for any }\eta_7>0,
\end{align}
where the commutator
\begin{align*}
 \mathcal{H}^{(8)}  =\pa((\Theta^\epsilon+\rho^0)\dv \U^\epsilon) - (\Theta^\epsilon+\theta^0) \partial^\alpha_x\dv \U^\epsilon
\end{align*}
can be controlled by
\begin{align}\label{ht4}
  \big\|\mathcal{H}^{(8)}\big\| &\leq C( \| D_x^1(\Theta^\epsilon+\theta^0)\| _{L^\infty}\| D_x^s\U^\epsilon\|
+\| D_x^1\U^\epsilon\| _{L^\infty}\| D^{s}_x(\Theta^\epsilon+\theta^0)\| )\nonumber\\
   & \leq C\| \mathcal{E}^\epsilon(t)\| _{s}^2+C\| \mathcal{E}^\epsilon(t)\| _{s}.
\end{align}

For the fourth term on the left-hand side of \eqref{HT1}, we integrate it by parts to deduce that
\begin{align} \label{ht5}
   &- \kappa \left\langle \partial^\alpha_x\left( \frac{1}{N^\epsilon+\rho^0}\Delta \U^\epsilon\right), \partial^\alpha_x\Theta^\epsilon\right\rangle
   \nonumber \\
  & = - \kappa  \left\langle  \frac{1}{N^\epsilon+\rho^0}\Delta \partial^\alpha_x \Theta^\epsilon , \partial^\alpha_x\Theta^\epsilon\right\rangle
   -  \kappa\big\langle\mathcal{H}^{(9)}, \partial^\alpha_x\Theta^\epsilon\big\rangle  \nonumber\\
 & = \kappa \left\langle  \frac{1}{N^\epsilon+\rho^0}\nabla\partial^\alpha_x \Theta^\epsilon , \nabla\partial^\alpha_x\Theta^\epsilon\right\rangle
     \nonumber\\
   & \quad + \kappa\left \langle \nabla \left(\frac{1}{N^\epsilon+\rho^0}\right)   \nabla\partial^\alpha_x \Theta^\epsilon,
\partial^\alpha_x \Theta^\epsilon\right\rangle -  \kappa\big\langle\mathcal{H}^{(9)}, \partial^\alpha_x\Theta^\epsilon\big\rangle,
   \end{align}
where
\begin{align*}
   \mathcal{H}^{(9)}=  \partial^\alpha_x\left( \frac{1}{N^\epsilon+\rho^0}\Delta \Theta^\epsilon\right)
   - \frac{1}{N^\epsilon+\rho^0}\Delta \partial^\alpha_x \Theta^\epsilon.
\end{align*}
By the Moser-type and H\"{o}lder inequalities, the regularity of $\rho^0$, the
positivity of $N^\epsilon+\rho_0$ and \eqref{mo}, we find that
 \begin{align}\label{ht6}
 & \ \ \   \big|\big\langle\mathcal{H}^{(9)},
 \partial^\alpha_x\Theta^\epsilon\big\rangle\big|
    \leq  \big\|\mathcal{H}^{(9)}\big\|\cdot \| \partial^\alpha_x\Theta^\epsilon \|  \nonumber\\
   & \leq C \left(\left\Vert D_x^1\left(\frac{1}{N^\epsilon+\rho^0}\right)\right\Vert_{L^\infty}\| \Delta \Theta^\epsilon\| _{s-1}
   +\| \Delta \Theta^\epsilon\| _{L^\infty}\left\Vert\frac{1}{N^\epsilon+\rho^0}\right\Vert_{s}\right)\| \partial^\alpha_x\Theta^\epsilon \| \nonumber\\
   &\leq \eta_8 \| \nabla \Theta^\epsilon\| ^2_{s}+C_{\eta_8} (\| \mathcal{E}^\epsilon(t)\| ^{2s}_{s}+1)\| \mathcal{E}^\epsilon(t)\|^2_s
 \end{align}
 and
 \begin{align}\label{ht7}
 & \left|\left \langle \nabla \left(\frac{1}{N^\epsilon+\rho^0}\right) \nabla{\partial^\alpha_x \Theta^\epsilon} ,
\partial^\alpha_x \Theta^\epsilon\right\rangle\right|\nonumber\\
   &  \leq\eta_{9}\|\nabla \partial^\alpha_x\Theta^\epsilon\|^2
   + C_{\eta_{9}} \left\Vert \nabla\left(\frac{1}{N^\epsilon+\rho^0}\right)\right\Vert_{L^\infty}^2
   \|\partial^\alpha_x   \Theta^\epsilon\|^2\nonumber\\
   &  \leq \eta_{9}\|\nabla \partial^\alpha_x\Theta^\epsilon\|^2
   + CC_{\eta_{9}}  (\|\mathcal{E}^\epsilon(t)\|^2_{s}+1)\| \partial^\alpha_x\Theta^\epsilon\|^2
   \end{align}
 for any $\eta_8>0 $ and $\eta_{9}>0$, where
 we have used the assumption  $s>n/2+2$ in the derivation of \eqref{ht6}
and the imbedding $H^l ( \Omega )\hookrightarrow L^\infty (\mathbb{R}^n)$ for $l>n/2$.

 Now, we estimate every term on the right-hand side of \eqref{HT1}.
 By virtue of the regularity of $ \theta^0$ and $\u^0$, and Cauchy-Schwarz's inequality, the first
 term $\mathcal{S}^{(1)}$ can be estimated as follows.
\begin{align}\label{ht8}
  \big| \mathcal{S}^{(1)} \big| \leq  C(\|\mathcal{E}^\epsilon(t)\|_{s}^2 +1)
   ( \|\partial^\alpha_x \Theta^\epsilon\|^2 +\|\partial^\alpha_x\U^\epsilon\|^2).
\end{align}

For the  terms $\mathcal{S}^{(i)}\,(i=2,3,4,5)$, we utilize the regularity of $ \rho^0$, $\u^0$ and $\mathbf{q}^0$,
the positivity of $ N^\epsilon+\rho^0$, Cauchy-Schwarz's inequality and \eqref{ho} to deduce that
\begin{align}\label{ht88}
     \big|\mathcal{S}^{(2)}  \big|+  \big|\mathcal{S}^{(3)}  \big| +  \big|\mathcal{S}^{(4)}  \big|+  \big|\mathcal{S}^{(5)}  \big|
   \leq  C(\|\mathcal{E}^\epsilon(t)\|_{s}^{2s} +\|\mathcal{E}^\epsilon(t)\|_{s}^{2})+
   C\|\partial^\alpha_x\Theta^\epsilon\|^2,
\end{align}
while for the sixth term $\mathcal{S}^{(6)}$, we integrate by parts, use Cauchy-Schwarz's inequality and the positivity of
$\Theta^\epsilon+\rho^0$ to obtain that
\begin{align}\label{ht9}
  \big|\mathcal{S}^{(6)}  \big|= & \left| \left\langle\partial_x^{\alpha-\alpha_1}\left\{\frac{ 2\mu}{N^\epsilon+\rho^0} |\mathbb{D}(\U^\epsilon)|^2
+\frac{\lambda}{N^\epsilon+\rho^0}|\mbox{tr}\mathbb{D}(\U^\epsilon)|^2\right\},\partial_x^{\alpha+\alpha_1}\Theta^\epsilon\right\rangle\right|\nonumber\\
 \leq &\eta_{10} \|\nabla \pa \Theta^\epsilon\|^2+C_{\eta_{10}}(\|\mathcal{E}^\epsilon(t)\|_{s}^4+\|\mathcal{E}^\epsilon(t)\|_{s}^{2(s+1)})
\end{align}
for any $\eta_{10}>0$, where $\alpha_1=(1,0,0)$ or $(0,1,0)$ or $(0,0,1)$.  Similarly, we have
\begin{align}\label{ht11}
  \big|\mathcal{S}^{(7)}  \big| +  \big|\mathcal{S}^{(8)}  \big|
 \leq &\eta_{11} \|\nabla \pa \Theta^\epsilon\|^2+C_{\eta_{11}}(\|\mathcal{E}^\epsilon(t)\|_{s}^2+\|\mathcal{E}^\epsilon(t)\|_{s}^{2(s+1)})
\end{align}
for any $\eta_{11}>0$.

For the ninth term $\mathcal{S}^{(9)}$, similar to $\mathcal{R}^{(4)}$, we have
\begin{align}\label{ht12}
  \big|\mathcal{S}^{(9)}  \big|& \leq  \sigma_0\| \partial^\alpha_x\F \| ^2+C(\| \mathcal{E}^\epsilon(t)\| _{s}^s
+1)(\| \partial^\alpha_x
N^\epsilon\| ^2+\| \partial^\alpha_x\Theta^\epsilon\| ^2).
\end{align}

for any $\sigma_0>0$. For the tenth term $\mathcal{S}^{(10)}$, we have
\begin{align}\nonumber%\label{ht122}
   \mathcal{S}^{(10)}& =  \left\langle\frac{1}{N^\epsilon+\rho^0}\partial^\alpha_x (\Theta^\epsilon)^4,
   \partial^\alpha_x\Theta^\epsilon \right\rangle
+  \big\langle \mathcal{H}^{(10)},
\partial^\alpha_x\Theta^\epsilon
\big\rangle,
\end{align}
where
\begin{align*}
 \mathcal{H}^{(10)} =\partial^\alpha_x\left\{
\frac{(\Theta^\epsilon)^4 }{N^\epsilon+\rho^0}\right\}-\partial^\alpha_x (\Theta^\epsilon)^4
\frac{1}{N^\epsilon+\rho^0}.
\end{align*}
By the positivity of $ \Theta^\epsilon+\rho^0$,
 and the properties of $\Theta^\epsilon$ obtained in Proposition \ref{Pb}, we have
 \begin{align}\nonumber
   \left | \left\langle\frac{1}{N^\epsilon+\rho^0}\partial^\alpha_x (\Theta^\epsilon)^4,
   \partial^\alpha_x\Theta^\epsilon \right\rangle\right|
   \leq C(\| \mathcal{E}^\epsilon(t)\| _{s}^{8}
+\| \partial^\alpha_x\Theta^\epsilon\| ^2).
\end{align}
By the Cauchy-Schwarz, Moser-type inequalities and Sobolev embedding theorem, we infer that
 \begin{align}\nonumber%\label{ht13}
 &  \ \ \    \big|\big\langle\mathcal{H}^{(10)},
  \partial^\alpha_x\Theta^\epsilon\big\rangle\big| \leq  \big\| \mathcal{H}^{(10)}\big\| \cdot \| \partial^\alpha_x\Theta^\epsilon \|  \nonumber\\
  & \leq C \left[\left\Vert D_x^1\left(\frac{1}{N^\epsilon+\rho^0}\right)\right\Vert_{L^\infty}\| (\Theta^\epsilon)^4 \| _{s-1}
   +\| (\Theta^\epsilon)^4\| _{L^\infty}\left\Vert\frac{1 }{N^\epsilon+\rho^0}\right\Vert_{s}\right]\| \partial^\alpha_x\Theta^\epsilon \| \nonumber\\
 %  &\leq   C  (\| \mathcal{E}^\epsilon(t)\| ^{8}_{s}+1) \| \partial^\alpha_x   \Theta^\epsilon\| ^2. \nonumber
 &\leq   C [ (\| \mathcal{E}^\epsilon(t)\| ^{2s}_{s}+\| \mathcal{E}^\epsilon(t)\|^{8}_{s}+1)\| \mathcal{E}^\epsilon(t)\| ^{2}_{s}+ \| \partial^\alpha_x   \Theta^\epsilon\| ^2]. \nonumber
    \end{align}
  Hence, we obtain
   \begin{align}\label{ht13}
  \big| \mathcal{S}^{(10)}\big|
   \leq  C [ (\| \mathcal{E}^\epsilon(t)\| ^{2s}_{s}+\| \mathcal{E}^\epsilon(t)\|^{8}_{s}+1)\| \mathcal{E}^\epsilon(t)\| ^{2}_{s}+
   \|\mathcal{E}^\epsilon(t)\| _{s}^{8}+\| \partial^\alpha_x   \Theta^\epsilon\| ^2].
\end{align}

  The terms $\mathcal{S}^{(11)}$,  $\mathcal{S}^{(12)}$ and  $\mathcal{S}^{(13)}$ can be bounded in a way similar to
  that for $\mathcal{S}^{(10)}$, and hence we get
     \begin{align}\label{ht14}
  \big| \mathcal{S}^{(11)}\big|+\big| \mathcal{S}^{(12)}\big|+\big| \mathcal{S}^{(13)}\big|
   \leq C[(\| \mathcal{E}^\epsilon(t)\| _{s}^{8}+1)\| \partial^\alpha_x\mathcal{E}^\epsilon(t)\| ^2+\|\mathcal{E}^\epsilon(t)\| _{s}^{8}].
\end{align}

Substituting  \eqref{ht1}--\eqref{ht14} into \eqref{HT1}, we conclude that
 \begin{align}\label{HT202}
& \frac12\frac{\dif}{\dif t} \langle \partial^\alpha_x\Theta^\epsilon,
\partial^\alpha_x\Theta^\epsilon \rangle +
\kappa \left\langle  \frac{1}{N^\epsilon+\rho^0}\nabla\partial^\alpha_x \Theta^\epsilon , \nabla\partial^\alpha_x\Theta^\epsilon\right\rangle \nonumber\\
& -(\eta_8+  \eta_{9}+\eta_{10}+\eta_{11})\| \nabla
\Theta^\epsilon \|_s^2  \nonumber\\
\leq   &  {C}_{{\eta}}\big[(\| \mathcal{E}^\epsilon(t)\| ^{2(s+1)}_{s}
    +\| \mathcal{E}^\epsilon(t)\| ^8_{s}+1) \| \mathcal{E}^\epsilon(t)\| ^2_{s} \big]\nonumber\\
  & +\sigma_0\| \F\| ^2_{s}+\eta_7 \|\nabla \partial^\alpha_x \U^\epsilon\|^2
\end{align}
for some constant $ {C}_{{\eta}}>0$ depending on $\eta_i$ ($i=8,9,10,11$).

%\newpage
Applying the operator $\partial^\alpha_x$ to  \eqref{raderror3} and \eqref{raderror4}, multiplying  the resulting equations
    by ${ }\partial^\alpha_x\F $ and   $\partial^\alpha_x\G $  respectively, and integrating then over
    ${\mathbb T}^n$, one obtains that
\begin{align}
  &  \frac12\frac{\dif}{\dif t}(\epsilon \|(\partial^\alpha_x\F , \partial^\alpha_x\G )\| ^2) +  \| \partial^\alpha_x\F \| ^2
   +\| \partial^\alpha_x\G \| ^2\nonumber\\
% & +\int (\cu \partial^\alpha_x\F^\epsilon\cdot \partial^\alpha_x \G -\cu \partial^\alpha_x\G \cdot
% \partial^\alpha_x\F )\dif x\nonumber\\
 =  &   \left\langle\partial^\alpha_x\big[(\Theta^\epsilon)^4\big], \partial^\alpha_x\F \right\rangle
  +4\left\langle\partial^\alpha_x\big[(\Theta^\epsilon)^3\theta^0\big],  \partial^\alpha_x\F \right\rangle\nonumber\\
    &   +6\left\langle\partial^\alpha_x\big[(\Theta^\epsilon)^2(\theta^0)^2\big],  \partial^\alpha_x\F \right\rangle
    +4\left\langle\partial^\alpha_x\big[\Theta^\epsilon(\theta^0)^3\big],  \partial^\alpha_x\F \right\rangle\nonumber\\
 %        [\partial^\alpha_x(\U^\epsilon\times \mathbf{q}^0)+\partial^\alpha_x(\u^0\times \G )]
%  - \partial^\alpha_x(\U^\epsilon\times \G ),  \partial^\alpha_x\F \right\rangle\nonumber\\
 & -  \left\langle ({\epsilon} \partial^\alpha_x\partial_t   \mathbf{q}^0+\epsilon\partial^\alpha_x\partial_t(-\Delta)^{-1}\dv \mathbf{q}^0),
 \partial^\alpha_x\F \right\rangle\nonumber\\
: =  & \sum^{5}_{i=1}\mathcal{T}^{(i)} . \label{L2MM}
\end{align}

For the term $\mathcal{T}^{(1)}$, by Cauchy-Schwarz's inequality, \eqref{mo}, and Sobolev's embedding theorem, we see that
\begin{align}\label{jja}
\big| \mathcal{T}^{(1)}\big|\leq \sigma_1 \|\partial^\alpha_x \F\|^2
  +C_{\sigma_1}\| \mathcal{E}^\epsilon(t)\| _{s}^{8}.
\end{align}
 for any $\sigma_1>0$. Similarly, we have
 \begin{align}\label{jjb}
\big| \mathcal{T}^{(2)}\big|+\big| \mathcal{T}^{(3)}\big|+\big| \mathcal{T}^{(4)}\big|\leq \sigma_2 \|\partial^\alpha_x \F\|^2
  +C_{\sigma_2}(\| \mathcal{E}^\epsilon(t)\| _{s}^{6}+\| \mathcal{E}^\epsilon(t)\| _{s}^{2}).
\end{align}
 for any $\sigma_2>0$.

For the term $\mathcal{T}^{(5)}$, one gets from the regularity of $\dv \mathbf{q}^0$ and Cauchy-Schwarz's inequality that
\begin{align}\label{jjc}
\big| \mathcal{T}^{(5)}\big|\leq C\epsilon^2 +C \|\partial^\alpha_x \F\|^2.
\end{align}

Pulling \eqref{jja}--\eqref{jjc} into \eqref{L2MM}, we find that
\begin{align}
   &  \frac12\frac{\dif}{\dif t}(\epsilon \|(\partial^\alpha_x\F , \partial^\alpha_x\G )\| ^2) + (1-\sigma_1-\sigma_2) \| \partial^\alpha_x\F \| ^2
   +\| \partial^\alpha_x\G \| ^2\nonumber\\
     &\qquad  \leq C(\| \mathcal{E}^\epsilon(t)\| _{s}^{8}+\| \mathcal{E}^\epsilon(t)\| _{s}^{2})+C\epsilon^2. \label{L2M2b}
\end{align}

Combining \eqref{HN1},  \eqref{HU202}, and  \eqref{HT202} with \eqref{L2M2b}, summing up $\alpha$ with
$0\leq|\alpha|\leq s$, using the fact that $N^\epsilon +\rho^0\geq \hat N+\hat \rho>0$,
 choosing $\eta_i$ ($i=1,\dots,11$), and then $\sigma_0, \sigma_1,\sigma_2$ sufficiently small,
and noticing that $s>n/2+2$ is an integer, we obtain \eqref{H2}. This completes the proof of Lemma 3.2.
\end{proof}

With the estimate  \eqref{H2} in hand, we can now  prove Proposition \ref{P31}.

\begin{proof}[Proof of Proposition \ref{P31}]
As in \cite{JL,JL2}, we introduce an $\epsilon$-weighted energy functional
$$\Gamma^\epsilon(t)  =  \v \mathcal{E}^\epsilon(t)\v ^2_{s}.$$
 Then, it follows from \eqref{H2} that there exists a constant
 $\epsilon_1> 0$ depending only on $T$, such that
for any $\epsilon\in (0,\epsilon_1]$ and any  $t\in (0,T]$,
\begin{align}\label{gma}
\Gamma^\epsilon(t)\leq C\Gamma^\epsilon(t = 0)+C\int^t_0\Big\{
\big((\Gamma^\epsilon)^{2(s+1)} +1\big)\Gamma^\epsilon\Big\}(\tau)\dif \tau
+ C\epsilon^2.
\end{align}
 Thus, applying the Gronwall lemma to \eqref{gma}, and keeping in mind that $\Gamma^\epsilon \,(t=0)\leq C\epsilon^2$
 and Proposition \ref{P31}, we find that there exist a $0<T_1<1$ and an $\epsilon>0$, such that $T^\epsilon\geq T_1$
for all $\epsilon\in (0,\epsilon_1]$ and $\Gamma^\epsilon(t)\leq C\epsilon^2$ for all $ t \in (0,T_1]$.
Therefore, the desired a priori estimate \eqref{www} holds. Moreover,
by the standard continuation induction argument, we can extend $T^\epsilon\geq T_0$  to any $T_0<T_*$.
\end{proof}

%%%%%%%%%%%%%%%%%%%%%%%%%%%%%%%%%%%%%%%%%%%%%%%%%%%%%%%%%%%%%%%%%%%%%%%%%%%%%%%%%%%%%%%%%%%%%%%%%%%%%%

\medskip \noindent
{\bf Acknowledgements:}
We would like to express our thanks to the anonymous referees for their valuable comments which lead to substantial improvements of the original
manuscript. The first author is supported by the National Basic Research Program under the Grant 2011CB309705, NSFC (Grant Nos. 11229101, 11371065),
and Beijing Center for Mathematics and Information Interdisciplinary Sciences. The second author is supported in part by NSFC (Grant No. 11271184) and PAPD. The third author is supported by NSFC (Grant No.11171213), Shanghai Rising Star Program No.12QA1401600 and Shanghai Committee of Science and Technology (Grant No. 15XD1502300)
.

\bibliographystyle{plain}

\end{document}